\documentclass[final,leqno,letterpaper]{article}
\usepackage[utf8]{inputenc}
\usepackage[top=1in, bottom=0.95in, left=0.9in, right=0.9in]{geometry}

\usepackage{xcolor}
\usepackage{graphicx}

\newcommand{\bfS}{\mathbf{S}}

\newcommand{\bfH}{\mathbf{H}}
\newcommand{\bfR}{\mathbf{R}}

\newcommand{\bfD}{\mathbf{D}}
\newcommand{\bfB}{\mathbf{B}}

\newcommand{\bfx}{\mathbf{x}}
\newcommand{\bfy}{\mathbf{y}}

\newcommand{\Rp}{\bfR_{RR}}

\newcommand{\Rm}{\bfR_{ME}}

\newcommand{\bfV}{\mathbf{V}}

\newcommand{\bfLam}{\boldsymbol{\Lambda}}

\usepackage{bm}
\usepackage{mathtools}
\usepackage{amsmath}
\usepackage{amsthm}
\usepackage{algorithmicx}

\usepackage{amsfonts}
\newcommand{\bfI}{\mathbf{I}}

\usepackage{url}

\def\lf{\left\lfloor}   
\def\rf{\right\rfloor}

\DeclareMathAlphabet\mathbfcal{OMS}{cmsy}{b}{n}

\newcommand{\bfM}{\mathbf{M}}

\newcommand{\be}{\mathbf{e}}
\newcommand{\bfL}{\mathbf{L}}
\newcommand{\bfQ}{\mathbf{Q}}

\newcommand{\bfA}{\mathbf{A}}
\newcommand{\bfPhi}{\boldsymbol\Phi}
\newcommand{\bfPsi}{\boldsymbol\Psi}
\newcommand{\bv}{\mathbf{v}}
\newcommand{\LM}{\mathbf{L}_M}
\newcommand{\rev}{\color{black}}

\newtheorem{mydef}{Definition}
\newtheorem{prop}{Proposition}
\newtheorem{theorem}{Theorem}
\newtheorem{lem}{Lemma}

\newtheorem{remark}{Remark}
\usepackage{algorithm2e}
\RestyleAlgo{boxruled}

\title{Saddle point preconditioners for weak-constraint 4D-Var}
\author{Jemima M. Tabeart\footnotemark[1] \and John W. Pearson\footnotemark[2]}

\begin{document}
	\renewcommand{\thefootnote}{\fnsymbol{footnote}}
	\footnotetext[1]{Department of Mathematics and Computer Science, Eindhoven University of Technology, De Zaale, Eindhoven, 5612 AZ, The Netherlands (\texttt{j.m.tabeart@tue.nl})}
	\footnotetext[2]{School of Mathematics, The University of Edinburgh, James Clerk Maxwell Building, The King’s Buildings, Peter Guthrie Tait Road, Edinburgh, EH9 3FD, United Kingdom (\texttt{j.pearson@ed.ac.uk})}
	\maketitle
	\begin{abstract}
		{{Data assimilation algorithms combine information from observations and prior model information to obtain the most likely state of a dynamical system. The linearised weak-constraint four-dimensional variational assimilation problem can be reformulated as a saddle point problem, which admits more scope for preconditioners than the primal form. In this paper we design new terms which can be used within existing preconditioners, such as block diagonal and constraint-type preconditioners. 
				Our novel preconditioning approaches: (i) incorporate model information, and (ii) are designed to target correlated observation error covariance matrices. To our knowledge (i) has not previously been considered for data assimilation problems. We develop new theory demonstrating the effectiveness of the new preconditioners within Krylov subspace methods. Linear and non-linear numerical experiments reveal that our new approach leads to faster convergence than existing state-of-the-art preconditioners for a broader range of problems than indicated by the theory alone. We present a range of numerical experiments performed in serial}}.
	\end{abstract}
	\section{Introduction}
	
	Data assimilation has seen substantial interest in fields such as numerical weather prediction \cite{Carrassi18,rawlins07}, ecology \cite{PinningtonEwanM.2016Itro,PinningtonEwanM.2017Uteo}, and hydrology  \cite{COOPER2018199,waller18} in recent decades. The variational data assimilation problem can be written mathematically as follows:
	For a given time window $[t_0,t_N]$, let $\bfx^t_i\in \mathbb{R}^s$ be the true state of a dynamical system of interest at time $t_i$, where $s$ is the number of state variables. 
	Data assimilation algorithms combine observations of a dynamical system, $\bfy_i\in\mathbb{R}^{p_i}$ at times $t_i$, with prior information from a model, $\bfx_b \in \mathbb{R}^s$, to find $\bfx_i\in \mathbb{R}^s$, the most likely state of the system at time $t_i$.  The prior, or background state, is valid at initial time $t_0$ and can be written as an approximation to the true state via $\bfx_b = \bfx^t_0 + \epsilon^b$. We assume that the background errors $\epsilon^b\sim\mathcal{N} (0,\bfB) $, where $\bfB\in\mathbb{R}^{s\times s}$ is a background error covariance matrix. 
	In order to compare observations made at different locations, or of different variables to those in the state vector $\bfx_i$,  we define a, possibly non-linear, observation operator $\mathcal{H}_i:\mathbb{R}^{s} \rightarrow \mathbb{R}^{p_i}$ which maps from state variable space to observation space at time $t_i$. {\rev Observations at time $t_i$ are written as $\bfy_i = \mathcal{H}_i[\bfx^t_i]+\epsilon_i \in\mathbb{R}^{p_i}$}, for $i=0,1,\dots,N$, where the observation error $\epsilon_i\sim \mathcal{N}(0,\bfR_i)$ and $\bfR_i\in\mathbb{R}^{p_i\times p_i}$ are observation error covariance matrices.

	In weak-constraint four-dimensional varational data assimilation (4D-Var) the state $\bfx_{i-1}$ at time $t_{i-1}$  is propagated to the next observation time $t_{i}$ using an imperfect forecast model, $\mathcal{M}_i$, to obtain
	$\bfx_i = \mathcal{M}_i(\bfx_{i-1})+\epsilon^m_i.$
	The model error at each time is given by $\epsilon^m_i\sim\mathcal{N} (0,\bfQ_i) $, where $\bfQ_i\in\mathbb{R}^{s\times s}$ is the model error covariance matrix at time $t_i$. It is typically assumed that the error covariance matrices are mutually uncorrelated across different types and different observation times.
	The analysis, or most likely state $\bfx_0$ at time $t_0$, minimises the weak constraint 4D-Var objective function
	given by 
	\begin{align}
		\nonumber J(\bfx_0,\bfx_1,\dots,\bfx_N)={}&(\bfx_0-\bfx_b)^\top\mathbf{B}^{-1}(\bfx_0-\bfx^b)+\sum_{i=0}^{N}(\bfy_i-\mathcal{H}_i[\bfx_i])^\top\bfR_i^{-1}(\bfy_i-\mathcal{H}_i[\bfx_i])\\ 
		\label{eq:CostFn} &{}+\sum_{i=1}^N(\bfx_{i}-\mathcal{M}_i(\bfx_{i-1}))^\top\bfQ_{i}^{-1}(\bfx_{i}-\mathcal{M}_i(\bfx_{i-1})). 
	\end{align}

	Weak-constraint 4D-Var is used in numerical weather prediction (NWP) to estimate the initial condition for a weather forecast {\color{black}\cite{tr2006accounting,tremolet2007model}}. Practical implementations of \eqref{eq:CostFn} pose mathematical and computational challenges. Firstly, the dimension of the problem can be large: in NWP \cite{Carrassi18} the dimension of the state can be of order $10^9$, and the number of observations can be of order $10^6$. This application is also time critical, so the time that can be allocated to the data assimilation procedure is very limited in operational situations. Computational efficiency is therefore vital, and designing techniques to ensure fast convergence of the minimisation of the objective function has been an ongoing area of research interest, see for instance \cite{fisher2017parallelization,bonavita2017strategy,ElAkkraoui13,Gurol14}.

	The weak constraint objective function \eqref{eq:CostFn} is typically solved via an incremental approach, where a small number of non-linear outer loops and a larger number of linearised inner loops are solved  \cite{GrattonS.2007AGMf,gratton2011range}. Standard Krylov subspace solvers can then be applied for the inner loops. An alternative approach involves solving the linearised inner loop using a saddle  point formulation \cite{gratton2018guaranteeing,fisher2017parallelization,green2019model,dauvzickaite2020spectral}, {which} admits a richer choice of preconditioning structures compared to the primal form. 
	Prior work has developed specific preconditioners for the saddle point data assimilation problem \cite{gratton2018guaranteeing,fisher2017parallelization}, typically focusing on approximations to the term containing information about the linearised model. Block diagonal preconditioners are appealing, due to the potential to apply the MINRES algorithm and develop guaranteed theoretical insight about the convergence rates based on the eigenvalues of the preconditioned system, although inexact constraint preconditioners with GMRES have been found to yield better performance in data assimilation settings for a variety of problems \cite{gratton2018guaranteeing,freitag2018low}. 
	
	Recent research on the primal form of the variational data assimilation problem has revealed that the observation error covariance matrices $\bfR_i$ play an important role in determining the convergence of iterative methods \cite{tabeart2018conditioning,tabeart2020conditioning}. In the last {\rev{decades}}, researchers have increasingly made use of observing systems that have correlated observation error covariance matrices \cite{weston14,stewart08}.  Previous saddle point preconditioners typically applied the exact inverses of $\bfR_i$ \cite{gratton2018guaranteeing,green2019model}, which is known to be computationally infeasible for many satellite observing systems \cite{weston14,tabeart2020improving}. It is therefore expected that new terms within saddle point preconditioners which incorporate correlated information from $\bfR_i$ and are inexpensive to apply could be beneficial in terms of convergence for many observing systems. {\rev{In what follows, we therefore consider preconditioners for the observation error covariance matrix that explicitly allow for full correlation matrices.}}

	{\rev{In this paper, we consider novel terms within existing preconditioning structures for the saddle point framework, with particular focus on the correlated observation error setting}}. 
	To our knowledge, our preconditioners account explicitly for model information within the model term for the first time. We begin in Section~\ref{sec:Background} by introducing the saddle point formulation of the weak-constraint 4D-Var data assimilation problem, and presenting existing state-of-the-art preconditioners for the saddle point setting.  In Section~\ref{sec:EvalBounds} we prove bounds on the eigenvalues of a block diagonal preconditioned system in terms of constituent matrices of the saddle point problem. In Sections~\ref{sec:ApproxLhat} and \ref{sec:Rprecond} we then analyse the effect of applying existing and new preconditioners for the model term and observation error covariance term respectively. In Section~\ref{sec:NumFramework} we present {\rev{our numerical framework for the experimental results in~Section \ref{sec:NumExpt}}}. 
	For both the  non-linear Lorenz 96 problem and linear heat equation problem we find that our new preconditioners result in a reduced iteration count compared to block diagonal and {inexact constraint} preconditioners in a variety of settings, {{in the presence of correlated observation errors.}} 
	Although the theoretical guarantees only apply to the block diagonal preconditioner, qualitative behaviour is similar for the inexact constraint preconditioner, with large reductions in iterations and {matrix--vector products}. 
	Finally, in Section~\ref{sec:Conclusions} we present our conclusions.

	\section{Background}\label{sec:Background}
	
	\subsection{Data assimilation}

	In this section we introduce the saddle point formulation of the weak constraint four-dimensional variational (4D-Var) data assimilation problem \eqref{eq:CostFn}. Given a time window $[t_0,t_N]$, split into $N$ subwindows, we wish to find the state $\bfx\in \mathbb{R}^s$ at time $t_0$ that is closest in a weighted norm sense both to the observations throughout the time window, and to prior information at the initial time propagated by a model.
	The incremental primal formulation updates $\bfx_0^{(l+1)} = \bfx_0^{(l)}+\boldsymbol\delta\bfx^{(l)}$ by solving an objective function via a series of inner and outer loops to find a sequence of increments to the background state $\bfx_b = \bfx_0^{(0)}$. In the inner loop a linearised problem is solved, typically using iterative solvers such as the Conjugate Gradient method \cite{cg}, and the outer loop is used to update linearisations of model and observation operators.
	
	For each outer loop $l$, the inner loop minimises a quadratic objective function to find  $\boldsymbol\delta\bfx^{(l)}\in\mathbb{R}^{s( N+1)}$, {{where $\boldsymbol\delta\bfx^{(l)} = \bfx^{(l+1)}-\bfx^{(l)}$. 
			Writing $\boldsymbol\delta\bfx = (\boldsymbol\delta\bfx_0^\top,\boldsymbol\delta\bfx_1^\top,\dots,\boldsymbol\delta\bfx_N^\top)^\top$,}} the full non-linear observation operator $\mathcal{H}_i$ (similarly the model operator $\mathcal{M}_i$) is linearised about the current best guess $\bfx_i^{(l)}$ to obtain the linearised operator $\bfH^{(l)}_i$ (respectively $\bfM^{(l)}_i$). 
	The updated initial guess $\boldsymbol\delta\bfx_0^{(l)}$ is propagated forward {\color{black}between observation times} by $\bfM^{(l)}_i$ to obtain $\boldsymbol\delta\bfx^{(l)}_{i+1} = \bfM^{(l)}_i\boldsymbol\delta\bfx^{(l)}_{i}$. \color{black} We note that the time between observations is likely to consist of multiple numerical model time-steps, hence $\bfM_i^{(l)}$ often corresponds to the composition of many discretised models for a single observation time-step. 
	
	\color{black} Alternatively, the quadratic objective function in the inner loop may be replaced with a saddle point system. Following the notation of \cite[Eq. (3.17)]{green2019model}, we substitute the linearised objective function with the following saddle point system:
	\begin{equation}\label{eq:saddlepointsystem}
		\begin{pmatrix}
			\mathbf{D} & \mathbf{0} & \mathbf{L} \\
			\mathbf{0} & \mathbf{R} & \mathbf{H} \\
			\mathbf{L}^{\top} & \mathbf{H}^{\top} & \mathbf{0}\end{pmatrix}
		\begin{pmatrix}\boldsymbol\delta \boldsymbol\eta \\
			\boldsymbol\delta \boldsymbol{\nu} \\
			\boldsymbol\delta \mathbf{x}
		\end{pmatrix}=\begin{pmatrix}
			\mathbf{b} \\
			\mathbf{d} \\
			\mathbf{0}
		\end{pmatrix}.
	\end{equation}

	In this paper we focus on new preconditioners for the saddle point coefficient matrix:
	\begin{equation}\label{eq:saddlepointA}
		\mathbfcal{A}=\begin{pmatrix}
			\mathbf{D} & \mathbf{0} & \mathbf{L} \\
			\mathbf{0} & \mathbf{R} & \mathbf{H} \\
			\mathbf{L}^{\top} & \mathbf{H}^{\top} & \mathbf{0}\end{pmatrix} \in \mathbb{R}^{(2s+p)(N+1)\times(2s+p)(N+1)},
	\end{equation}
	where $\bfD$, $\bfR$, $\bfH$ are the following block diagonal matrices:
	\begin{align*}
		\bfD={}&\texttt{blkdiag}\left(\bfB,\bfQ_1,\bfQ_2,\dots,\bfQ_N\right)\in \mathbb{R}^{(N+1)s\times (N+1)s}, \\
		\bfR={}&\texttt{blkdiag}\left(\bfR_0,\bfR_1,\bfR_2,\dots,\bfR_N\right)\in \mathbb{R}^{(N+1)p\times (N+1)p}, \\
		\bfH={}&\texttt{blkdiag}\left(\bfH^{(l)}_0,\bfH^{(l)}_1,\bfH^{(l)}_2,\dots,\bfH^{(l)}_N\right)\in \mathbb{R}^{(N+1)p\times (N+1)s},
	\end{align*}
	with the $i$th diagonal block in each case being the relevant matrix for time $t_i$. Here, the {\scshape Matlab}-style notation `\texttt{blkdiag}' is used to describe a block diagonal matrix in terms of its block diagonal entries.

	The matrix $\bfL\in \mathbb{R}^{(N+1)s\times (N+1)s}$ contains the linearised model information that evolves $\bfx_0$ {{between the}} $N$ {{observation times, or subwindows}}, written 
	\begin{equation}\label{def:L}
		\mathbf{L}=\begin{pmatrix}
			\mathbf{I} & & & &\\
			-\mathbf{M}^{(l)}_{1} & \mathbf{I} & & &\\
			& -\mathbf{M}^{(l)}_{2} & \mathbf{I} & & \\
			& & \ddots & \ddots & \\
			& &&  -\mathbf{M}^{(l)}_{N} &\mathbf{I}  
		\end{pmatrix},
	\end{equation}
	where $\mathbf{I}$ denotes the $s\times s$ identity matrix. As we consider preconditioners for the inner loop only, for the remainder of the paper we drop the superscripts that denote the outer loop iteration, and simply use $\bfH_i$ and $\bfM_i$.
	
	As the saddle point system is indefinite, methods such as the Conjugate Gradient algorithm cannot be used. Depending on the choice of preconditioner, MINRES \cite{minres} or GMRES \cite{gmres} are examples of viable algorithms for solving linear systems of the form \eqref{eq:saddlepointsystem}. We note that one challenge of the saddle point formulation is that monotonic decrease of the objective cost is no longer guaranteed \cite{gratton2018guaranteeing}. This can be challenging in operational settings where a very limited number of iterations are performed. In this paper we design new preconditioners and study their numerical performance when iterative methods are permitted to reach convergence. { The goal is the design of sufficiently effective and efficient preconditioners to allow convergence of MINRES or GMRES approaches in an operational setting.}

	Numerical methods for saddle point systems are well-studied in the optimisation literature (see \cite{BGL} for a comprehensive survey). In order to devise suitable approximations of such systems, as in the forthcoming section, one powerful approach is to approximate the `leading' $(1,1)$ block of the matrix, along with its Schur complement \cite{Kuznetsov,MGW}. For saddle point problems arising from data assimilation, not only does the $(1,1)$ block often have complex structure, but the constraint block contains evolution of the model terms forward/backward in time, leading to a Schur complement which is very difficult to approximate cheaply \cite{fisher2017parallelization}. Therefore, approximating the constraint block cheaply is an important consideration for a good preconditioner of the matrix \eqref{eq:saddlepointA}. The work presented here therefore attempts to combine suitable approximations for the $(1,1)$ block, the constraint block, and hence the Schur complement.

	\subsection{Preconditioners for saddle point systems from data assimilation}
	We now introduce some preconditioners that have been applied to the saddle point formulation of the data assimilation problem described above. We start by considering two classes of preconditioner: the block diagonal preconditioner and the inexact constraint preconditioner. {\color{black} Although the {\color{black} standard constraint approach} \cite{bergamaschi2007inexact,bergamaschi2011erratum} would include $\widehat{\bfH}$, an approximation to $\bfH$, the use of $\widehat{\bfH} = \mathbf{0}$ is popular in the data assimilation setting where it is commonly called the `inexact' constraint preconditioner \cite{fisher2017parallelization,gratton2018guaranteeing,freitag2018low}.}
	In this paper we use the same forms that are considered in \cite{gratton2018guaranteeing}, which are given by 
	\begin{equation}\label{eq:prec}
		\mathbfcal{P}_D = \begin{pmatrix}
			\widehat{\bfD} &\mathbf{0} &\mathbf{0}\\
			\mathbf{0}& \widehat{\bfR} &\mathbf{0}\\
			\mathbf{0} &\mathbf{0} & \widehat{\bfS} \\
		\end{pmatrix},\qquad
		\mathbfcal{P}_I = \begin{pmatrix}
			{\bfD} & \mathbf{0} & \widehat{\bfL}\\
			\mathbf{0} & \widehat{\bfR} & \mathbf{0}\\
			\widehat{\bfL}^\top & \mathbf{0} & \mathbf{0} \\
		\end{pmatrix},
	\end{equation}
	where $\widehat{\bfD}$ and $\widehat{\bfR}$ are approximations to $\bfD$ and $\bfR$ which are easier to apply than the original matrices, and $\widehat{\bfL}$ is an efficient approximation of $\bfL$. The exact (negative) Schur complement is given by
	\begin{equation*}
		\bfS = \bfL^\top\bfD^{-1}\bfL + \bfH^\top\bfR^{-1}\bfH,
	\end{equation*}
	which we may approximate by $\widehat{\bfS}$. For instance, one may drop the second term and take an approximation $\widehat{\bfL}$ to $\bfL$: that is a Schur complement approximation of the form $\widehat{\bfL}^\top\bfD^{-1}\widehat{\bfL}$.
	
	{\color{black}One attraction of the block diagonal preconditioner is that one may guarantee a fixed convergence rate based on the eigenvalues of the preconditioned system $\mathbfcal{P}_D^{-1}\mathbfcal{A}$. In Section~\ref{sec:EvalBounds} we present bounds on the eigenvalues of the preconditioned system for the block diagonal preconditioner. The inexact constraint preconditioner has been found to yield improved convergence for both toy and operational-scale data assimilation problems, compared to the block diagonal preconditioner for the same choices of $\widehat{\bfL}$ and $\widehat{\bfR}$ \cite{fisher2017parallelization,gratton2018guaranteeing,freitag2018low}. However, it is difficult to develop any theoretical results apart from in the unrealistic setting where $\widehat{\bfL} = \bfL$, $\widehat{\bfH} = \bfH$, $\widehat{\bfR} = \bfR$ \cite{fisher2017parallelization}.}

	In this paper we consider new preconditioners $\widehat{\bfL}$ and $\widehat{\bfR}$ for the model and observation error terms. We now present choices of $\widehat{\bfL}$ and $\widehat{\bfR}$ that have been studied before. 
	Applying the exact choice of $\bfL^{-1}$ in a preconditioner can be prohibitively expensive as it requires serial products of model terms and their adjoints \cite{fisher2017parallelization}. 
	{Evaluations of the model acting on a vector can dominate the computational cost of data assimilation algorithms, so reducing the total number of matrix--vector products of the form $\mathbf{M}_i\mathbf{v}$ or $\mathbf{M}_i^{\top}\mathbf{v}$ is an important consideration when designing new preconditioners.} 
	A common choice of preconditioner for $\bfL$ replaces all of the sub-diagonal terms $\bfM_i$ with $\mathbf{0}$ \cite{gratton2018guaranteeing,green2019model}. We shall denote this choice of preconditioner as $\bfL_0$, and note that $\bfL_0$ is the $(N+1)s\times (N+1)s$ identity matrix, hence so is its inverse. 
	
	Another choice of preconditioner that was considered in \cite{fisher2017parallelization,gratton2018guaranteeing} is given by 
	\begin{equation*}
		\bfL_I = \begin{pmatrix}
			\bfI\\
			-\bfI  & \bfI \\
			& -\bfI & \bfI \\
			&& \ddots & \ddots\\
			&&&-\bfI &\bfI
		\end{pmatrix},
	\end{equation*}
	where the sub-diagonal blocks are replaced with the negative $s\times s$ identity matrix. This was found to perform well experimentally when used in the inexact constraint preconditioner for a two-layer quasi-geostrophic model \cite{fisher2017parallelization}. However, it does not include any model information, and may not be expected to perform well for all problems. In Section \ref{sec:ApproxLhat} we introduce a new preconditioner that incorporates model information.
	
	\color{black}
	Within the outer--inner loop structure of the weak-constraint solver, information from previous outer loops can be used to cheaply update the preconditioners in subsequent loops. This has been found to be computationally beneficial \cite{fisher2018low}. Such approaches could be combined with the method considered in this paper, and are expected to provide value. However, we consider preconditioners for the first outer loop where there is no additional information, and hence developing computationally feasible stand-alone preconditioners is crucial.\color{black}

	In most prior work the exact observation error covariance matrix has been used in preconditioners. This is due to the fact that the cost of applying $\bfR^{-1}$ is assumed not to be prohibitive, due to the inherent block diagonal structure of $\bfR$. However, the rising use of more complex correlation structures (such as inter-channel observation errors, see \cite{weston14}) means that  applying $\bfR^{-1}$ exactly is not always affordable and may be a computational bottleneck. One option is to apply a computationally cheap approximation of $\bfR$ within the preconditioner, such as the diagonal of $\bfR$, however this is typically an inaccurate approximation which can significantly delay convergence.  In this paper we consider new classes of preconditioners for the observation error covariance term {which we expect to improve convergence in the presence of correlated observation error}.
	
	\section{Bounds on eigenvalues for the block diagonal preconditioner}\label{sec:EvalBounds}
	
	For a {symmetric positive definite} block diagonal preconditioner of the form $\mathbfcal{P}_D$, it is possible to describe the convergence of the preconditioned MINRES algorithm by analysing the eigenvalues of the preconditioned system $\mathbfcal{P}_D^{-1}\mathbfcal{A}$ (see \cite[Chapter 4]{ESW2014}, for instance). {In the data assimilation setting $\mathbfcal{P}_D$ is symmetric positive definite, as all proposed approximations $\widehat{\mathbf{D}}$, $\widehat{\mathbf{R}}$ as well as the Schur complement approximation $\widehat{\mathbf{S}}$ are themselves defined to be symmetric positive definite.} 
	{{We note that for the preconditioned GMRES algorithm spectral information is insufficient to describe convergence (see e.g. \cite{greenbaum1996any}). Therefore the theoretical bounds presented in Sections \ref{sec:EvalBounds}--\ref{sec:Rprecond} are informative strictly for the block diagonal preconditioner, with experiments for the inexact constraint preconditioner $\mathbfcal{P}_{I}$ being presented in Section \ref{sec:NumExpt} for numerical comparison.}}
	
	The matrices $\mathbfcal{A}$ and $\mathbfcal{P}_D$ defined in \eqref{eq:saddlepointA} and \eqref{eq:prec} can be written as
	\begin{equation}\label{APdiag}
		\mathbfcal{A}=\left(\begin{array}{cc}
			\bfPhi & \bfPsi^\top \\ \bfPsi & \mathbf{0} \\
		\end{array}\right),\qquad\mathbfcal{P}_D=\left(\begin{array}{cc}
			\widehat{\bfPhi} & \mathbf{0} \\ \mathbf{0} & \widehat{\bfS} \\
		\end{array}\right),
	\end{equation}
	where
	\begin{equation*}
		\bfPhi=\left(\begin{array}{cc}
			\bfD & \mathbf{0} \\ \mathbf{0} & \bfR \\
		\end{array}\right),\qquad \bfPsi=\left(\begin{array}{cc}
			\bfL^\top & \bfH^\top \\
		\end{array}\right),
	\end{equation*}
	and with $\widehat{\bfPhi}$ and $\widehat{\bfS}$ approximations of $\bfPhi$ and the (negative) Schur complement $\bfS=\bfPsi\bfPhi^{-1}\bfPsi^\top$. 
	We now denote
	\begin{equation*}
		\widetilde{\bfS}=\bfL^\top \bfD^{-1}\bfL,\qquad\widehat{\bfS}=\widehat{\bfL}^\top \bfD^{-1}\widehat{\bfL},
	\end{equation*}
	where $\widehat{\bfD}$, $\widehat{\bfR}$, $\widehat{\bfL}$ are approximations of $\bfD$, $\bfR$, $\bfL$. For the forthcoming theory, we suppose that $\bfD$, $\widehat{\bfD}$, $\bfR$, $\widehat{\bfR}$, $\bfS$, $\widetilde{\bfS}$, $\widehat{\bfS}$ are symmetric positive definite, with
	\begin{align*}
		&\lambda(\widehat{\bfD}^{-1}\bfD)\in[\lambda_\bfD,\Lambda_\bfD],\qquad\lambda(\widehat{\bfR}^{-1}\bfR)\in[\lambda_\bfR,\Lambda_\bfR],\\ &\lambda(\widetilde{\bfS}^{-1}\bfS)\in[\lambda_\bfS,\Lambda_\bfS],\qquad\lambda\left((\widehat{\bfL}^\top\widehat{\bfL})^{-1}(\bfL^\top \bfL)\right)\in[\lambda_\bfL,\Lambda_\bfL],
	\end{align*}
	where $\lambda(\cdot)$ denotes the eigenvalues of a matrix. We may then prove the following result:
	
	\begin{theorem}\label{theorem:BDPrec}
		With the definitions as stated above, the eigenvalues of $\mathbfcal{P}_D^{-1}\mathbfcal{A}$ are real, and satisfy:
		\begin{align*}
			\lambda(\mathbfcal{P}_D^{-1}\mathbfcal{A})\in{}&\left[\frac{\lambda_{\bfPhi}-\sqrt{\lambda_{\bfPhi}^2+4\Lambda_{\bfPhi}\Lambda_\bfS\Lambda_\bfL\kappa(\bfD)}}{2},\frac{\Lambda_{\bfPhi}-\sqrt{\Lambda_{\bfPhi}^2+\frac{4\lambda_{\bfPhi}\lambda_\bfS\lambda_\bfL}{\kappa(\bfD)}}}{2}\right] \\
			&\cup[\lambda_{\bfPhi},\Lambda_{\bfPhi}]\cup\left[\frac{\lambda_{\bfPhi}+\sqrt{\lambda_{\bfPhi}^2+\frac{4\lambda_{\bfPhi}\lambda_\bfS\lambda_\bfL}{\kappa(\bfD)}}}{2},\frac{\Lambda_{\bfPhi}+\sqrt{\Lambda_{\bfPhi}^2+4\Lambda_{\bfPhi}\Lambda_\bfS\Lambda_\bfL\kappa(\bfD)}}{2}\right],
		\end{align*}
		where $\lambda_{\bfPhi}=\min\{\lambda_\bfD,\lambda_\bfR\}$, $\Lambda_{\bfPhi}=\max\{\Lambda_\bfD,\Lambda_\bfR\}$, and $\kappa(\cdot)$ denotes the condition number of a matrix.
	\end{theorem}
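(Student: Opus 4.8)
The plan is to analyse the eigenvalue problem $\mathbfcal{A}\begin{pmatrix}\bfx\\\bfy\end{pmatrix}=\mu\,\mathbfcal{P}_D\begin{pmatrix}\bfx\\\bfy\end{pmatrix}$ in the standard saddle-point block form of \eqref{APdiag}, following the well-known technique for block diagonally preconditioned saddle point systems (as in \cite{ESW2014,BGL}). Writing the two block rows gives $\bfPhi\bfx+\bfPsi^\top\bfy=\mu\widehat{\bfPhi}\bfx$ and $\bfPsi\bfx=\mu\widehat{\bfS}\bfy$. First I would isolate the ``easy'' part of the spectrum: if $\bfy=\mathbf{0}$ then $\bfx$ must lie in the nullspace of $\bfPsi$ and $\bfPhi\bfx=\mu\widehat{\bfPhi}\bfx$, so such $\mu$ are generalised eigenvalues of $(\bfPhi,\widehat{\bfPhi})$ and hence lie in $[\lambda_{\bfPhi},\Lambda_{\bfPhi}]$, where by the block-diagonal structure of $\bfPhi=\mathrm{blkdiag}(\bfD,\bfR)$ and $\widehat{\bfPhi}=\mathrm{blkdiag}(\widehat{\bfD},\widehat{\bfR})$ the extreme generalised eigenvalues are exactly $\lambda_{\bfPhi}=\min\{\lambda_\bfD,\lambda_\bfR\}$ and $\Lambda_{\bfPhi}=\max\{\Lambda_\bfD,\Lambda_\bfR\}$. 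Realness of all eigenvalues follows because $\mathbfcal{P}_D$ is symmetric positive definite and $\mathbfcal{A}$ is symmetric, so $\mathbfcal{P}_D^{-1}\mathbfcal{A}$ is similar to the symmetric matrix $\mathbfcal{P}_D^{-1/2}\mathbfcal{A}\mathbfcal{P}_D^{-1/2}$.

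Next I would treat the generic case $\bfy\neq\mathbf{0}$, $\mu\neq\lambda(\bfPhi,\widehat{\bfPhi})$, so that $\mu\widehat{\bfPhi}-\bfPhi$ is invertible; eliminating $\bfx=(\mu\widehat{\bfPhi}-\bfPhi)^{-1}\bfPsi^\top\bfy$ yields the nonlinear eigenvalue relation $\bfPsi(\mu\widehat{\bfPhi}-\bfPhi)^{-1}\bfPsi^\top\bfy=-\mu\widehat{\bfS}\bfy$. Taking inner products with $\bfy$ and using a Rayleigh-quotient argument, I would bound $\bfPsi(\mu\widehat{\bfPhi}-\bfPhi)^{-1}\bfPsi^\top$ in terms of the ``exact'' Schur complement $\bfS=\bfPsi\bfPhi^{-1}\bfPsi^\top$ and a scalar factor depending on how far $\mu$ is from $[\lambda_{\bfPhi},\Lambda_{\bfPhi}]$. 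The key inequalities are: $\bfy^\top\widehat{\bfS}\bfy$ is comparable to $\bfy^\top\bfS\bfy$ via $\lambda\big((\widehat{\bfL}^\top\widehat{\bfL})^{-1}(\bfL^\top\bfL)\big)\in[\lambda_\bfL,\Lambda_\bfL]$ together with $\lambda(\widetilde{\bfS}^{-1}\bfS)\in[\lambda_\bfS,\Lambda_\bfS]$ — here one must be careful that $\widehat{\bfS}=\widehat{\bfL}^\top\bfD^{-1}\widehat{\bfL}$ mixes $\widehat{\bfL}$ with the exact $\bfD$, which is why the condition number $\kappa(\bfD)$ enters when passing between $\widehat{\bfL}^\top\widehat{\bfL}$ bounds and $\widehat{\bfL}^\top\bfD^{-1}\widehat{\bfL}$ bounds. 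Substituting these comparisons reduces the matrix relation to a scalar quadratic inequality in $\mu$ of the form $\mu^2-\lambda_{\bfPhi}\mu-c_{\pm}\leq 0$ (and the reversed inequality), whose roots are exactly the endpoints $\tfrac{1}{2}\big(\lambda_{\bfPhi}\pm\sqrt{\lambda_{\bfPhi}^2+4\Lambda_{\bfPhi}\Lambda_\bfS\Lambda_\bfL\kappa(\bfD)}\big)$ and $\tfrac{1}{2}\big(\Lambda_{\bfPhi}\pm\sqrt{\Lambda_{\bfPhi}^2+4\lambda_{\bfPhi}\lambda_\bfS\lambda_\bfL/\kappa(\bfD)}\big)$ claimed in the theorem; separating $\mu<\lambda_{\bfPhi}$ from $\mu>\Lambda_{\bfPhi}$ gives the negative and positive intervals respectively.

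The main obstacle I anticipate is making the Rayleigh-quotient comparison between $\bfPsi(\mu\widehat{\bfPhi}-\bfPhi)^{-1}\bfPsi^\top$ and $\bfPsi\bfPhi^{-1}\bfPsi^\top$ fully rigorous and as tight as the stated bounds, since $(\mu\widehat{\bfPhi}-\bfPhi)^{-1}$ is indefinite when $\mu$ is in the negative branch versus the positive branch, so the direction of the inequalities flips and one must track signs carefully; simultaneously, threading the constant $\kappa(\bfD)$ through the two-step chain $\bfL^\top\bfL\leftrightarrow\widehat{\bfL}^\top\widehat{\bfL}$ and $\widehat{\bfL}^\top\widehat{\bfL}\leftrightarrow\widehat{\bfL}^\top\bfD^{-1}\widehat{\bfL}$ without losing a factor requires care. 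A secondary technical point is confirming that no eigenvalue is lost in the degenerate cases ($\bfPsi^\top\bfy=\mathbf{0}$ with $\bfy\neq\mathbf{0}$, or $\mu$ coinciding with a generalised eigenvalue of $(\bfPhi,\widehat{\bfPhi})$), which I would handle by a continuity/limiting argument or by noting such $\mu$ already lie in $[\lambda_{\bfPhi},\Lambda_{\bfPhi}]$. Once the scalar quadratics are in hand, reading off the three intervals is routine.
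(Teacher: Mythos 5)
Your proposal is correct and follows essentially the same route as the paper: the paper simply cites the standard eigenvalue bounds for block-diagonally preconditioned saddle point systems (rather than re-deriving them via the elimination/quadratic argument you sketch) and then, exactly as you identify, obtains the $\kappa(\bfD)$ factor by splitting the Schur-complement Rayleigh quotient $\bfx^\top\bfS\bfx/\bfx^\top\widehat{\bfS}\bfx$ through the chain $\bfS\to\widetilde{\bfS}\to\bfL^\top\bfL\to\widehat{\bfL}^\top\widehat{\bfL}\to\widehat{\bfS}$. The only substantive difference is presentational: you prove the generic bounds from scratch where the paper invokes them as known results.
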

	\begin{proof}
		Applying well-known results (see \cite[p.\,2906]{ReWa11}, \cite[Theorem 4.2.1]{Rees10}), we have 
		\begin{equation}\label{BDPrecBounds}
			\lambda\in\left[\frac{\delta-\sqrt{\delta^2+4\Delta\Phi}}{2},\frac{\Delta-\sqrt{\Delta^2+4\delta\phi}}{2}\right]\cup[\delta,\Delta]\cup\left[\frac{\delta+\sqrt{\delta^2+4\delta\phi}}{2},\frac{\Delta+\sqrt{\Delta^2+4\Delta\Phi}}{2}\right],
		\end{equation}
		where $\delta$, $\phi$ denote the minimum eigenvalues of $\widehat{\bfPhi}^{-1}\bfPhi$, $\widehat{\bfS}^{-1}\bfS$ for a general block diagonal saddle point preconditioner \eqref{APdiag}, and $\Delta$, $\Phi$ represent the corresponding maximum eigenvalues.
		
		Clearly, for this problem $\delta=\lambda_{\bfPhi}$ and $\Delta=\Lambda_{\bfPhi}$. It remains to analyse the minimum and maximum eigenvalues of the preconditioned Schur complement, which we note is within the range of the Rayleigh quotient (for $\mathbf{v}\neq\mathbf{0}$):
		\begin{equation}\label{Ssplitting}
			\frac{\mathbf{v}^\top \bfS\mathbf{v}}{\mathbf{v}^\top\widehat{\bfS}\mathbf{v}}=\frac{\mathbf{v}^\top \bfS\mathbf{v}}{\mathbf{v}^\top\widetilde{\bfS}\mathbf{v}}\cdot\frac{\mathbf{v}^\top\widetilde{\bfS}\mathbf{v}}{\mathbf{v}^\top\widehat{\bfS}\mathbf{v}}=\frac{\mathbf{v}^\top \bfS\mathbf{v}}{\mathbf{v}^\top\widetilde{\bfS}\mathbf{v}}\cdot\frac{\mathbf{v}^\top \bfL^\top \bfD^{-1}\bfL\mathbf{v}}{\mathbf{v}^\top \bfL^\top \bfL\mathbf{v}}\cdot\frac{\mathbf{v}^\top \bfL^\top \bfL\mathbf{v}}{\mathbf{v}^\top\widehat{\bfL}^\top\widehat{\bfL}\mathbf{v}}\cdot\frac{\mathbf{v}^\top\widehat{\bfL}^\top\widehat{\bfL}\mathbf{v}}{\mathbf{v}^\top\widehat{\bfL}^\top \bfD^{-1}\widehat{\bfL}\mathbf{v}}.
		\end{equation}
		Observing that
		\begin{align*}
			&\frac{\mathbf{v}^\top \bfS\mathbf{v}}{\mathbf{v}^\top\widetilde{\bfS}\mathbf{v}}\in[\lambda_\bfS,\Lambda_\bfS],\qquad\frac{\mathbf{v}^\top \bfL^\top \bfD^{-1}\bfL\mathbf{v}}{\mathbf{v}^\top \bfL^\top \bfL\mathbf{v}}=\frac{\mathbf{y}^\top \mathbf{y}}{\mathbf{y}^\top \bfD\mathbf{y}}\in\left[\frac{1}{\Lambda_\bfD},\frac{1}{\lambda_\bfD}\right], \\
			&\frac{\mathbf{v}^\top \bfL^\top \bfL\mathbf{v}}{\mathbf{v}^\top\widehat{\bfL}^\top\widehat{\bfL}\mathbf{v}}\in[\lambda_\bfL,\Lambda_\bfL],\qquad\frac{\mathbf{v}^\top\widehat{\bfL}^\top\widehat{\bfL}\mathbf{v}}{\mathbf{v}^\top\widehat{\bfL}^\top \bfD^{-1}\widehat{\bfL}\mathbf{v}}=\frac{\mathbf{z}^\top \bfD\mathbf{z}}{\mathbf{z}^\top \mathbf{z}}\in[\lambda_\bfD,\Lambda_\bfD],
		\end{align*}
		where $\mathbf{y}=\bfD^{-1/2}\bfL\mathbf{v}\neq\mathbf{0}$, $\mathbf{z}=\bfD^{-1/2}\widehat{\bfL}\mathbf{v}\neq\mathbf{0}$, we may write that
		\begin{equation*}
			\frac{\mathbf{v}^\top \bfS\mathbf{v}}{\mathbf{v}^\top\widehat{\bfS}\mathbf{v}}\in\left[\frac{\lambda_\bfS\lambda_\bfL\lambda_\bfD}{\Lambda_\bfD},\frac{\Lambda_\bfS\Lambda_\bfL\Lambda_\bfD}{\lambda_\bfD}\right]=\left[\frac{\lambda_\bfS\lambda_\bfL}{\kappa(\bfD)},\Lambda_\bfS\Lambda_\bfL\kappa(\bfD)\right].
		\end{equation*}
		Therefore, it holds that $\phi\geq\frac{\lambda_\bfS\lambda_\bfL}{\kappa(\bfD)}$ and $\Phi\leq\Lambda_\bfS\Lambda_\bfL\kappa(\bfD)$. Substituting the bounds for $\delta$, $\Delta$, $\phi$, $\Phi$ into \eqref{BDPrecBounds} then gives the result.
	\end{proof}
	
	{
		\begin{remark}
			Theorem \ref{theorem:BDPrec} is an extension of known results, for example \cite[p.\,2906]{ReWa11} and \cite[Theorem 4.2.1]{Rees10}, in particular an application of this methodology to saddle point systems arising from weak constraint 4D-Var. We highlight that eigenvalue results of this form are important because they lead to concrete convergence properties of MINRES. As in \cite[Theorem 4.14]{ESW2014}, if $\lambda(\mathbfcal{P}_D^{-1}\mathbfcal{A})\in[-\mu_1,-\mu_2]\cup[\mu_3,\mu_4]$, with $\mu_1,\mu_2,\mu_3,\mu_4>0$ such that $\mu_1-\mu_2=\mu_4-\mu_3$, then after $2\ell$ iterations of MINRES:
			\begin{equation*}
				\|\mathbf{r}^{(2\ell)}\|_{\mathbfcal{P}_D^{-1}} \leq 2 \left( \frac{\sqrt{\mu_1 \mu_4}-\sqrt{\mu_2 \mu_3}}{\sqrt{\mu_1 \mu_4}+\sqrt{\mu_2 \mu_3}}\right)^{\ell} \, \|\mathbf{r}^{(0)}\|_{\mathbfcal{P}_D^{-1}},
			\end{equation*}
			where $\mathbf{r}^{(\cdot)}$ denotes the residual vector at a given iteration. Note that both positive or negative intervals for the eigenvalues of $\mathbfcal{P}_D^{-1}\mathbfcal{A}$ can always be stretched such that $\mu_1-\mu_2=\mu_4-\mu_3$ holds, so this result holds without loss of generality. This allows one to ascertain the convergence behaviour of MINRES when we apply approximations of $\bfL^\top \bfL$, $\bfR$ (as well as $\bfD$), including those discussed in Sections \ref{sec:ApproxLhat} and \ref{sec:Rprecond}.
		\end{remark}
	}
	
	\begin{remark}
		Due to the way the Rayleigh quotient is factored in \eqref{Ssplitting}, Theorem \ref{theorem:BDPrec} gives a potentially very weak bound when $\bfD$ is ill-conditioned. We find the main features which affect the quality of the preconditioner, as predicted by the result, are the approximations of $\bfD$ and $\bfR$, the effect of dropping the second term of the Schur complement, and the quality of the approximation of $\bfL^\top \bfL$ (characterised by the eigenvalues of $\widehat{\bfL}^{-\top}\bfL^\top\bfL\widehat{\bfL}^{-1}$). The latter quantity is the subject of the forthcoming analysis.
	\end{remark}

	\section{{ Approximations} $\widehat{\bfL}$}\label{sec:ApproxLhat}
	Theorem \ref{theorem:BDPrec} suggests that the eigenvalues of the preconditioned system are influenced by the quality of the approximation of $\widehat{\bfL}^\top\widehat{\bfL}$ to $\bfL^\top\bfL$ in the block diagonal preconditioner.
	In this section we consider existing and new choices of $\widehat{\bfL}$ and analyse the eigenvalues and structure of  $\widehat{\bfL}^{-\top}\bfL^\top\bfL\widehat{\bfL}^{-1}$, which is similar to $(\widehat{\bfL}^\top\widehat{\bfL})^{-1}(\bfL^\top \bfL)$. The first choice of $\widehat{\bfL}$, $\bfL_0$, has previously been used for saddle point preconditioners for the data assimilation problem \cite{gratton2018guaranteeing,green2019model}. We also propose $\bfL_M$, a new class of parallelisable preconditioners that depends on a user-defined parameter and incorporates model information.

	\subsection{A new preconditioner, $\LM$, and the eigenvalues of $\LM^{-\top}\bfL^\top\bfL\bfL_M^{-1}$}\label{sec:}
	
	We {{begin by defining}} our proposed preconditioner, $\LM$, which incorporates model information explicitly. 
	For a user chosen parameter $1\le k \le N+1$,  every $k$th block { sub-diagonal} of $\LM$ (i.e. $\bfM_k,\bfM_{2k},\bfM_{3k},\dots$) is set equal to $\mathbf{0}$. The other entries of $\LM$ correspond to those of $\bfL$. Formally we write this as in the definition below. 
	\begin{mydef}\label{def:LM}
		Let $k\in \mathbb{N}$, and let $\bfL_M=\bfL_M(k)\in\mathbb{R}^{(N+1)s\times (N+1)s}$ be a block matrix made up of $s\times s$ blocks. For i,j = $1,\dots, N+1$ we define
		\begin{equation*}
			\emph{the } (i,j)\emph{th block of }\LM = \begin{cases}
				\bfI & \emph{if } i=j,\\
				-\bfM_j & \emph{if } i=j+1 \emph{ and } j \emph{ is not divisible by } k,\\
				\mathbf{0}  & \emph{otherwise}.
			\end{cases}
		\end{equation*}
	\end{mydef}
	
	Similarly we can write the inverse of $\bfL_M$ as follows, using straightforward linear algebra:
	\begin{lem}
		Let $k\in \mathbb{N}$, and let $\bfL_M=\bfL_M(k)\in\mathbb{R}^{(N+1)s\times (N+1)s}$ be a block matrix made up of $s\times s$ blocks. For $i,j = 1,\dots, N+1$ we may evaluate that
		\begin{equation*}
			\emph{the } (i,j)\emph{th block of }\LM^{-1} = \begin{cases}
				\bfI & \emph{if } i=j,\\
				\prod_{m=1}^{i-j}\bfM_{i-m} & \emph{if } 1 \leq i-j \leq (i-1)\,\emph{mod}(k), \\
				\mathbf{0}  & \emph{otherwise}.
			\end{cases}
		\end{equation*}
	\end{lem}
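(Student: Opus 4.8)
The plan is to verify the claimed formula directly, exploiting the fact that the zeroed sub-diagonal blocks cause $\bfL_M$ to decouple. Write $\bfL_M = \bfI - \mathbf{E}$, where $\mathbf{E}$ is the strictly block-lower-bidiagonal matrix whose $(j+1,j)$ block equals $\bfM_j$ whenever $j$ is not divisible by $k$ and is $\mathbf{0}$ otherwise. The sub-diagonal blocks that are suppressed sit exactly at column indices $j = k, 2k, 3k, \dots$, so $\bfL_M$ is block diagonal with respect to the partition of $\{1,\dots,N+1\}$ into the consecutive segments $\{1,\dots,k\},\ \{k+1,\dots,2k\},\ \dots$ (the final segment possibly shorter), and each diagonal super-block has precisely the lower bidiagonal form of $\bfL$ in \eqref{def:L}, only of smaller size.

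First I would record the inverse of a single such super-block. If $\mathbf{G}$ is block lower bidiagonal with $\bfI$ on the diagonal and sub-diagonal blocks $-\bfM_{a+1}, -\bfM_{a+2}, \dots$, then $\mathbf{G}$ is unipotent with $\bfI - \mathbf{G}$ nilpotent, so $\mathbf{G}^{-1} = \sum_{r \ge 0} (\bfI - \mathbf{G})^r$. Since $\bfI - \mathbf{G}$ is block bidiagonal, $(\bfI - \mathbf{G})^r$ has a single nonzero block diagonal, namely its $r$th sub-diagonal, whose $(i,i-r)$ block is $\bfM_{i-1}\bfM_{i-2}\cdots\bfM_{i-r}$. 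Summing over $r$ shows that the $(i,j)$ block of $\mathbf{G}^{-1}$ is $\bfI$ for $i=j$, equals $\bfM_{i-1}\cdots\bfM_j = \prod_{m=1}^{i-j}\bfM_{i-m}$ for $i > j$, and is $\mathbf{0}$ for $i < j$. (This equally follows by a short induction on the block size of $\mathbf{G}$, or by a direct block multiplication in which the sub-diagonal terms telescope.)

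Then I would reassemble. The inverse of a block-diagonal matrix is the block diagonal of the inverses of its blocks, so the $(i,j)$ block of $\bfL_M^{-1}$ can be nonzero only when $i$ and $j$ lie in the same segment and $j \le i$. The segment containing block-row $i$ starts at index $k\lfloor (i-1)/k\rfloor + 1$, so ``$j$ in the same segment as $i$ with $j \le i$'' is equivalent to $0 \le i-j \le (i-1) - k\lfloor(i-1)/k\rfloor = (i-1)\bmod k$; separating out the case $i = j$ and reading off the product from the previous step yields exactly the stated formula. The only step requiring any care is this last translation between the ``same segment'' description and the modular inequality, in particular the degenerate case in which $i-1$ is divisible by $k$, where $(i-1)\bmod k = 0$ forces $j = i$ and the only nonzero block in block-row $i$ of $\bfL_M^{-1}$ is the diagonal identity; this is routine bookkeeping rather than a genuine obstacle.
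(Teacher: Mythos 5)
Your argument is correct: the decomposition of $\bfL_M$ into decoupled block-lower-bidiagonal super-blocks, the Neumann-series inversion of each unipotent super-block, and the translation of "same segment, $j\le i$" into the condition $0\le i-j\le (i-1)\bmod k$ are all valid, and they reproduce the stated formula exactly (including the degenerate rows where $(i-1)\bmod k=0$). The paper itself offers no proof — it asserts the lemma "using straightforward linear algebra" — so your write-up simply supplies the routine verification the authors omitted, in what is evidently the intended way.
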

	
	We note that $\bfL_M^{-1}$ is lower triangular, and both the number of non-zero blocks of $\LM^{-1}$ and the number of terms in each of the products of $\bfL_M^{-1}$ are controlled by the parameter $k$. Note that $\bfL$, defined in \eqref{def:L}, satisfies $\bfL=\bfL_M(N+1)$ according to this notation. 
	To further justify the effectiveness of this approximation, we now study the eigenvalues of $\bfL_M^{-\top}\bfL^\top\bfL\bfL_M^{-1}$ theoretically. We begin by stating the structure of  $\bfL_M^{-\top}\bfL^\top\bfL\bfL_M^{-1}$ in terms of the linearised model matrices $\bfM_i$, using straightforward linear algebra.
	
	\begin{lem}
		\label{def:A(M)}
		We can write $\bfL_M^{-\top}\bfL^\top\bfL\bfL_M^{-1} = \bfI + \bfA(\bfM)$ where the block entries of $\bfA(\bfM)\in\mathbb{R}^{s(N+1)\times s(N+1)}$ are defined as follows. For $n = 1, \dots,  \lf{\frac{N}{k}}\rf$,  
		\begin{equation*}
			[\bfA(\bfM)]_{i,j} = \begin{cases}
				(\prod_{t=i}^{nk}\bfM_t^\top)(\prod_{q=j}^{nk}\bfM_{nk-q+j}) & \emph{ for } (n-1)k+1\le i,j\le nk,\\
				-\prod_{t=j}^{nk}\bfM_{nk-t+j} & \emph{ for } i=nk+1, (n-1)k+1\le j \le nk,\\
				-\prod_{t=i}^{nk}\bfM_{t}^\top & \emph{ for } j=nk+1, (n-1)k+1\le i \le nk,\\
				\mathbf{0} & \emph{ otherwise},
			\end{cases}
		\end{equation*}
		{\rev where $[\bfA(\bfM)]_{i,j}$ denotes the $(i,j)$th block of $\bfA(\bfM)$.}
	\end{lem}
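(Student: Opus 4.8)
The plan is to compute the product $\bfL_M^{-\top}\bfL^\top\bfL\bfL_M^{-1}$ blockwise by exploiting the block-bidiagonal structure of $\bfL$ and the explicit formula for $\bfL_M^{-1}$ given in the preceding lemma. First I would observe that $\bfL^\top\bfL$ is block tridiagonal: its $(i,i)$ block is $\bfI+\bfM_i^\top\bfM_i$ (with the convention $\bfM_{N+1}=\mathbf{0}$), its $(i,i+1)$ block is $-\bfM_i^\top$, and its $(i+1,i)$ block is $-\bfM_i$. More usefully, I would write $\bfL = \bfL_M - \bfN$, where $\bfN$ collects exactly the ``dropped'' subdiagonal blocks, i.e.\ $\bfN$ has $(nk+1,nk)$ block equal to $-\bfM_{nk}$ for $n=1,\dots,\lf N/k\rf$ and zero elsewhere. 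Then $\bfL\bfL_M^{-1} = \bfI - \bfN\bfL_M^{-1}$, so
\begin{equation*}
\bfL_M^{-\top}\bfL^\top\bfL\bfL_M^{-1} = (\bfI - \bfN\bfL_M^{-1})^\top(\bfI - \bfN\bfL_M^{-1}) = \bfI - \bfL_M^{-\top}\bfN^\top - \bfN\bfL_M^{-1} + \bfL_M^{-\top}\bfN^\top\bfN\bfL_M^{-1}.
\end{equation*}
This identifies $\bfA(\bfM) = \bfL_M^{-\top}\bfN^\top\bfN\bfL_M^{-1} - \bfN\bfL_M^{-1} - \bfL_M^{-\top}\bfN^\top$, and the task reduces to reading off the blocks of these three terms.

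Next I would compute the blocks of $\bfN\bfL_M^{-1}$. Since $\bfN$ has a single nonzero block $-\bfM_{nk}$ in row $nk+1$, column $nk$, the product $\bfN\bfL_M^{-1}$ has nonzero blocks only in row $nk+1$ (for each valid $n$), and its $(nk+1,j)$ block equals $-\bfM_{nk}\cdot[\bfL_M^{-1}]_{nk,j}$. By the inverse formula, $[\bfL_M^{-1}]_{nk,j} = \prod_{m=1}^{nk-j}\bfM_{nk-m}$ precisely when $1\le nk-j\le (nk-1)\bmod k = k-1$, i.e.\ when $(n-1)k+1\le j\le nk-1$ (and $[\bfL_M^{-1}]_{nk,nk}=\bfI$); multiplying by $-\bfM_{nk}$ on the left telescopes into $-\prod_{t=j}^{nk}\bfM_{nk-t+j}$, matching the second case of the claimed formula after reindexing. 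The term $\bfL_M^{-\top}\bfN^\top$ is just the transpose, giving the third case $-\prod_{t=i}^{nk}\bfM_t^\top$ in column $nk+1$. Finally, for the quadratic term $\bfL_M^{-\top}\bfN^\top\bfN\bfL_M^{-1}$, note $\bfN^\top\bfN$ has a single nonzero block $\bfM_{nk}^\top\bfM_{nk}$ in position $(nk,nk)$ for each $n$; sandwiching with $\bfL_M^{-1}$ yields, in block position $(i,j)$, the product $[\bfL_M^{-1}]_{nk,i}^\top\,\bfM_{nk}^\top\bfM_{nk}\,[\bfL_M^{-1}]_{nk,j}$, which telescopes to $(\prod_{t=i}^{nk}\bfM_t^\top)(\prod_{q=j}^{nk}\bfM_{nk-q+j})$ for $(n-1)k+1\le i,j\le nk$, matching the first case. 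The crucial structural point, which I would check carefully, is that the contributions from distinct values of $n$ occupy disjoint block regions (the $n$th block of dropped terms only couples rows/columns in $\{(n-1)k+1,\dots,nk+1\}$), so there is no overlap and the cases can simply be listed; one must also handle the boundary where $N+1$ is not a multiple of $k$ so that the last ``dropped'' index is $\lf N/k\rf k$ and no term beyond it appears.

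I would also double-check the two edge conventions embedded in the statement: that $i=j=nk$ is covered by the first case with the empty-product reading $[\bfL_M^{-1}]_{nk,nk}=\bfI$, and that the indices $nk+1$ in the second and third cases never exceed $N+1$ (true since $n\le\lf N/k\rf$ forces $nk+1\le N+1$). The only genuine obstacle is bookkeeping: getting the telescoping products and their index ranges exactly right, in particular verifying that $\prod_{m=1}^{nk-j}\bfM_{nk-m}$ composed with $\bfM_{nk}$ really rewrites as $\prod_{t=j}^{nk}\bfM_{nk-t+j}$ (a substitution $m\mapsto$ appropriate shift) and that the same product appears transposed and squared consistently across the three terms. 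There is no deep difficulty — as the lemma's preface says, it is ``straightforward linear algebra'' — but the notation is dense enough that a careful, explicit treatment of one representative block in each of the three nonzero cases, plus the disjointness argument across different $n$, is the natural way to present it rigorously.
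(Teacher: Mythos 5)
The paper never actually writes out a proof of this lemma---it is asserted ``using straightforward linear algebra''---so you are supplying an argument the authors omitted. The route you choose (split off the dropped sub-diagonal blocks into a sparse matrix $\mathbf{N}$, observe that $\bfL\bfL_M^{-1}$ is the identity plus a term supported on rows $nk+1$, and expand the Gram product) is the natural one, and your index bookkeeping is correct where it matters: the range $1\le nk-j\le (nk-1)\,\mathrm{mod}(k)=k-1$ giving $(n-1)k+1\le j\le nk-1$, the telescoping of $\bfM_{nk}[\bfL_M^{-1}]_{nk,j}$ into $\prod_{t=j}^{nk}\bfM_{nk-t+j}$, the $i=j=nk$ empty-product convention, and the disjointness across $n$ (every block from the $n$th contribution has at least one index $\le nk$, while every block from the $(n+1)$th has both indices $\ge nk+1$).

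The one genuine flaw is a sign inconsistency in the decomposition. You define $\mathbf{N}$ to have $(nk+1,nk)$ block equal to $-\bfM_{nk}$ and write $\bfL=\bfL_M-\mathbf{N}$; but then $\bfL_M-\mathbf{N}$ has $(nk+1,nk)$ block $+\bfM_{nk}$, whereas $\bfL$ has $-\bfM_{nk}$ there, so the stated identity is false. Worse, in your expansion $\bfA(\bfM)=\bfL_M^{-\top}\mathbf{N}^\top\mathbf{N}\bfL_M^{-1}-\mathbf{N}\bfL_M^{-1}-\bfL_M^{-\top}\mathbf{N}^\top$ the cross terms enter with a minus sign, so identifying the block of $\mathbf{N}\bfL_M^{-1}$ itself (which you compute as $-\bfM_{nk}[\bfL_M^{-1}]_{nk,j}$) with the second case of the lemma double-counts the sign and would produce $+\prod_{t=j}^{nk}\bfM_{nk-t+j}$ where the lemma has $-\prod_{t=j}^{nk}\bfM_{nk-t+j}$. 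Two consistent fixes: either set $\mathbf{N}=\bfL_M-\bfL$ (so its nonzero blocks are $+\bfM_{nk}$) and keep your expansion, or keep your $\mathbf{N}$ but write $\bfL=\bfL_M+\mathbf{N}$, giving $\bfA(\bfM)=\mathbf{N}\bfL_M^{-1}+\bfL_M^{-\top}\mathbf{N}^\top+\bfL_M^{-\top}\mathbf{N}^\top\mathbf{N}\bfL_M^{-1}$, in which the cross terms carry the required minus sign intrinsically. The quadratic term is insensitive to the choice of sign, which is why your first case still comes out correctly. With that convention repaired, the proof is complete.
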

	
	We now briefly describe the structure of $\bfA(\bfM)$. The matrix is made up of $\lf{\frac{N}{k}}\rf$ overlapping diagonal blocks, where the size of each block is $(k+1)s\times (k+1)s$. Each block `overlaps' at the $(nk+1,nk+1)$th block of $\bfA(\bfM)$, meaning that the maximum number of non-zero blocks in any row or column is given by $2k+1$. We use this structure to demonstrate that our new preconditioner $\bfL_M$ yields a number of unit eigenvalues for the preconditioned term  $\bfL_M^{-\top}\bfL^\top\bfL\bfL_M^{-1}$.
	\begin{prop}\label{prop:evalsLML}
		Let $\bfL$ be defined as in \eqref{def:L} and $\LM$ as in Lemma \ref{def:A(M)}. For $2\le k\le N+1 $, 
		$\bfL_M^{-\top}\bfL^\top\bfL\bfL_M^{-1}$ has { at least} $rs$ unit eigenvalues where  $r=N+1-2\lf{\frac{N}{k}}\rf$.
	\end{prop}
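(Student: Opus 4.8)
The plan is to count the unit eigenvalues of $\bfL_M^{-\top}\bfL^\top\bfL\bfL_M^{-1} = \bfI + \bfA(\bfM)$ by showing that $\bfA(\bfM)$ has a large kernel; every vector in $\ker \bfA(\bfM)$ contributes a unit eigenvalue to $\bfI + \bfA(\bfM)$. So it suffices to prove that $\dim \ker \bfA(\bfM) \geq rs$ with $r = N+1 - 2\lfloor N/k\rfloor$, equivalently that $\operatorname{rank}\bfA(\bfM) \leq 2\lfloor N/k\rfloor\, s$.

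First I would exploit the block-sparsity structure described right before the proposition: $\bfA(\bfM)$ is built from $\lfloor N/k \rfloor$ overlapping diagonal blocks, the $n$th block occupying block-rows and block-columns $(n-1)k+1$ through $nk+1$, and these blocks meet only at the single shared block index $nk+1$. The key observation is that each such $(k+1)s \times (k+1)s$ diagonal block is itself of low rank: from the formula in Lemma \ref{def:A(M)}, within the $n$th block every nonzero entry is built as a product $(\prod_{t=i}^{nk}\bfM_t^\top)(\cdots)$ or $-\prod_{t=j}^{nk}\bfM_{nk-t+j}$ or $-\prod_{t=i}^{nk}\bfM_t^\top$, which means the whole $n$th diagonal block factors as $\bfw_n \bfw_n^\top$ for a suitable block-column $\bfw_n \in \mathbb{R}^{(k+1)s \times s}$ (with blocks $\prod_{t=i}^{nk}\bfM_t^\top$ for $i \le nk$ and $-\bfI$ in the last, $(nk+1)$st, slot, up to sign bookkeeping). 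Hence each diagonal block has rank at most $s$, and therefore $\operatorname{rank}\bfA(\bfM) \leq \sum_{n=1}^{\lfloor N/k\rfloor} s = \lfloor N/k\rfloor\, s$. Wait — I must be careful: the overlapping at index $nk+1$ means consecutive rank-$s$ pieces do not simply add their row/column supports disjointly, but rank is still subadditive over the sum of these $\lfloor N/k\rfloor$ rank-$s$ matrices (viewed as matrices on the full space by zero-padding), so the bound $\operatorname{rank}\bfA(\bfM) \le \lfloor N/k\rfloor\, s$ holds regardless. This actually gives $\dim\ker\bfA(\bfM) \ge (N+1 - \lfloor N/k\rfloor)s$, which is stronger than claimed; so more likely the intended argument produces exactly $\lfloor N/k\rfloor s$ or $2\lfloor N/k\rfloor s$ worth of "spoiled" eigenvalues, and one should instead directly exhibit $rs$ explicit unit eigenvectors rather than bound the rank.

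So the cleaner route, and the one I would actually carry out, is to directly produce $rs$ linearly independent vectors in $\ker\bfA(\bfM)$. Partition the block indices $\{1,\dots,N+1\}$ into the "interface" indices $\{nk+1 : n = 1,\dots,\lfloor N/k\rfloor\}$ together with the indices inside each diagonal block, versus the block-rows that are \emph{not touched at all} by any diagonal block of $\bfA(\bfM)$ — these are the rows $i$ with $(n-1)k+1 \le i \le nk+1$ never satisfied, i.e. the "free" rows. Counting: the union of the supports of the $\lfloor N/k\rfloor$ overlapping $(k+1)$-blocks covers $k\lfloor N/k\rfloor + 1$ consecutive... no: because the blocks overlap at one index each, the union of block-index-supports has size $(k+1)\lfloor N/k\rfloor - (\lfloor N/k\rfloor - 1) = k\lfloor N/k\rfloor + 1$. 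Among $N+1$ block indices this leaves $N+1 - k\lfloor N/k\rfloor - 1 = N - k\lfloor N/k\rfloor = (N \bmod k)$ completely free block indices, each giving $s$ trivial unit eigenvectors ($\be_i \otimes \bfv$). That only accounts for $(N\bmod k)s$, not $rs$, so additionally I must extract, from within each $(k+1)s$-dimensional block, the $ks$-dimensional kernel of the rank-$s$ matrix $\bfw_n\bfw_n^\top$; after accounting for the shared interface indices between consecutive blocks one shows the number of genuinely new unit eigenvectors contributed is such that the grand total is $rs = (N + 1 - 2\lfloor N/k\rfloor)s$.

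The main obstacle I anticipate is precisely this bookkeeping at the overlaps: a vector lying in the kernel of one diagonal block need not lie in the kernel of the neighbouring block because they share the interface index $nk+1$, so the kernels do not combine freely, and one has to carefully set up a linear system across each pair of adjacent blocks (or use a dimension count: total space $(N+1)s$, minus the dimension of $\operatorname{range}\bfA(\bfM)$, where one must show the ranges of the $\lfloor N/k\rfloor$ rank-$s$ pieces are \emph{independent} enough to sum to rank exactly $2\lfloor N/k\rfloor s$ — the factor $2$ presumably arising because each rank-$s$ piece $\bfw_n\bfw_n^\top$ has both its row space and column space "charged" once the interface coupling is unwound, or because the relevant object to count is not $\bfA(\bfM)$'s rank but the rank of $\bfL^\top\bfL - \bfL_M^\top\bfL_M$ congruence-transformed). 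I would resolve this by writing $\bfL = \bfL_M - \bfN$ where $\bfN$ collects exactly the deleted sub-diagonal blocks $-\bfM_{nk}$ at positions $(nk+1, nk)$ (there are $\lfloor N/k\rfloor$ of them, each of rank $\le s$), so that $\bfL^\top\bfL = (\bfL_M - \bfN)^\top(\bfL_M-\bfN) = \bfL_M^\top\bfL_M - \bfL_M^\top\bfN - \bfN^\top\bfL_M + \bfN^\top\bfN$, hence $\bfL_M^{-\top}(\bfL^\top\bfL)\bfL_M^{-1} - \bfI = -\bfL_M^{-\top}\bfL_M^\top\bfN\bfL_M^{-1} - \bfL_M^{-\top}\bfN^\top\bfL_M\bfL_M^{-1} + \bfL_M^{-\top}\bfN^\top\bfN\bfL_M^{-1} = -\bfN\bfL_M^{-1} - \bfL_M^{-\top}\bfN^\top + \bfL_M^{-\top}\bfN^\top\bfN\bfL_M^{-1}$; now $\bfN$ has only $\lfloor N/k\rfloor$ nonzero block-rows, so $\bfN\bfL_M^{-1}$ and $\bfN^\top\bfN\bfL_M^{-1}$ have rank $\le \lfloor N/k\rfloor s$ and $\bfL_M^{-\top}\bfN^\top$ has rank $\le \lfloor N/k\rfloor s$, giving $\operatorname{rank}(\bfA(\bfM)) \le 2\lfloor N/k\rfloor s$ cleanly, whence $\dim\ker\bfA(\bfM) \ge (N+1)s - 2\lfloor N/k\rfloor s = rs$, which is exactly the claim. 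This last factorisation argument is short and avoids the delicate overlap counting entirely, so that is the proof I would write.
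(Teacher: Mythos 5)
Your final argument is correct, but it is genuinely different from the paper's. The paper proves the proposition by explicitly exhibiting $rs$ linearly independent null vectors of $\bfA(\bfM)$, organised into three families: $(N-k\lf\frac{N}{k}\rf)s$ canonical vectors supported on the trailing zero block, $(k-2)s$ vectors of the form $(\mathbf{0},\dots,\bv_t^\top,-(\bfM_r\bv_t)^\top,\dots,\mathbf{0})^\top$ inside each of the $\lf\frac{N}{k}\rf$ overlapping blocks, and $s$ vectors from the first block; the count $1+(k-2)\lf\frac{N}{k}\rf+N-k\lf\frac{N}{k}\rf=N+1-2\lf\frac{N}{k}\rf$ then gives $r$. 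You instead write $\bfL=\bfL_M-\bfN$ with $\bfN$ carrying the $\lf\frac{N}{k}\rf$ deleted subdiagonal blocks, expand $\bfL_M^{-\top}\bfL^\top\bfL\bfL_M^{-1}-\bfI=-\bfN\bfL_M^{-1}-\bfL_M^{-\top}\bfN^\top+\bfL_M^{-\top}\bfN^\top\bfN\bfL_M^{-1}$, and bound the rank of the right-hand side by $2\lf\frac{N}{k}\rf s$; since $\bfA(\bfM)$ is symmetric, its kernel dimension $\geq rs$ yields $rs$ unit eigenvalues, exactly the paper's (lower-bound) conclusion. One step should be tightened: naive subadditivity over your three terms gives $3\lf\frac{N}{k}\rf s$, so to reach $2\lf\frac{N}{k}\rf s$ you must observe that the column spaces of the second and third terms both lie in $\mathrm{col}(\bfL_M^{-\top}\bfN^\top)$ while that of the first lies in $\mathrm{col}(\bfN)$, each of dimension at most $\lf\frac{N}{k}\rf s$. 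With that one-line fix your proof is complete, and it is arguably cleaner and more general than the paper's: it avoids all the overlap bookkeeping and shows immediately that deleting any $m$ subdiagonal blocks yields at least $(N+1-2m)s$ unit eigenvalues. What it loses relative to the paper is the explicit description of the eigenvectors. (Your discarded first attempt was rightly discarded: the overlapping diagonal blocks of $\bfA(\bfM)$ are not rank-$s$ outer products, because the $(nk+1,nk+1)$ corner is owned by the neighbouring block, so each contributes rank up to $2s$, consistent with the factor $2$ in $r$.)
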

	\begin{proof}
		From Lemma \ref{def:A(M)} we can construct eigenvectors corresponding to zero eigenvalues of $\bfA(\bfM)$, which will yield unit eigenvalues of  $\bfL_M^{-\top}\bfL^\top\bfL\bfL_M^{-1}$.
		Let $\be_t$ define the canonical vector taking unit value in position $t$ and zero elsewhere. Observe that $\bfA(\bfM)$ is block diagonal matrix with a  $(N-k\lf{\frac{N}{k}}\rf)s\times(N-k\lf{\frac{N}{k}}\rf)s$ zero block in the final position. We can construct $(N-k\lf{\frac{N}{k}}\rf)s$ linearly independent eigenvectors corresponding to the zero eigenvalue for this block using $\be_t$ for $t =(k\lf{\frac{N}{k}}\rf)s+1 ,\dots, Ns$.
		
		For each value of $n$ in Lemma \ref{def:A(M)} we obtain $(k-2)s$ eigenvectors corresponding to a zero eigenvalue, of the form
		\begin{equation*}
			(\mathbf{0}, \dots, \mathbf{0}, \bv_t^{\top}, -(\bfM_r\bv_t)^\top,\mathbf{0},\dots, \mathbf{0})^\top
		\end{equation*}
		for $rs+1\le t \le (r+1)s $ and $nk+2\le r \le (n+1)k-1$.  Therefore these contribute $(k-2)s\lf{\frac{N}{k}}\rf$ linearly independent eigenvectors corresponding to the zero eigenvalue across the whole matrix.
		From the first block in the matrix we obtain an additional $s$ linearly independent eigenvectors corresponding to the zero eigenvalue  via  $(\bv_t^\top,-(\bfM_1\bv_t)^\top,\mathbf{0},\dots,\mathbf{0})^\top$ for $t =1,\dots,s$.
		
		Combining the above reasoning, we obtain $rs$ unit eigenvalues of  $\bfL_M^{-\top}\bfL^\top\bfL\bfL_M^{-1}$ as required, where 
		\begin{equation*}
			r = 1 +  (k-2) \lf{\frac{N}{k}}\rf  + N - k\lf{\frac{N}{k}}\rf= N + 1 - 2\lf{\frac{N}{k}}\rf.
	\end{equation*}  \end{proof}
	
	We see that $r$ { does not decrease} as $k$ increases, i.e. incorporating information from more timesteps will { generally} lead to a larger number of unit eigenvalues of the preconditioned model term. Increasing the number of observation times $N$ will broadly lead to an increase in the number of unit eigenvalues of  $\bfL_M^{-\top}\bfL^\top\bfL\bfL_M^{-1}$, but this behaviour is non-monotonic. 
	
	If we introduce assumptions on the  spectral radii of the model operator terms, we can obtain explicit bounds on the eigenvalues of $\bfL_M^{-\top}\bfL^\top\bfL\bfL_M^{-1}$.
	
	\begin{prop}\label{prop:lowrankallM}
		If $\|\bfM_i\bfM_i^\top\|_2\le 1 \; \forall i$ then the eigenvalues of  $\bfL_M^{-\top}\bfL^\top\bfL\bfL_M^{-1}$ can be bounded above by $k + 1+2\sqrt{k}$.
	\end{prop}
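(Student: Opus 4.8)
The plan is to exploit the decomposition $\bfL_M^{-\top}\bfL^\top\bfL\bfL_M^{-1} = \bfI + \bfA(\bfM)$ from Lemma \ref{def:A(M)} and bound the eigenvalues of $\bfA(\bfM)$ in terms of the block structure described there. Since the matrix in question is symmetric positive definite, its eigenvalues are bounded above by $1 + \|\bfA(\bfM)\|_2$, so it suffices to control $\|\bfA(\bfM)\|_2$. First I would recall that $\bfA(\bfM)$ is block diagonal with $\lf\frac{N}{k}\rf$ overlapping blocks, each of size at most $(k+1)s\times(k+1)s$, plus a trailing zero block. Because the blocks only overlap in a single diagonal position, $\|\bfA(\bfM)\|_2$ is controlled by the maximum $2$-norm of an individual $(k+1)s\times(k+1)s$ block (the overlap contributes at most a factor related to how the two adjacent blocks interact at the shared entry, but one can bound the whole matrix by the worst single block up to the overlap). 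So the crux is to bound the norm of one generic block.

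Next I would analyse a single block of $\bfA(\bfM)$. From the formula in Lemma \ref{def:A(M)}, for fixed $n$ the block indexed by $(n-1)k+1\le i,j\le nk+1$ has a recognisable structure: writing $\bfP_i = \prod_{t=i}^{nk}\bfM_t^\top$ (a product of at most $k$ adjoint model blocks, with $\bfP_{nk+1}$ interpreted suitably), the $k\times k$ upper-left part has $(i,j)$ block equal to $\bfP_i \bfP_j^\top$ — wait, more precisely it factors as an outer product $\bfp\bfp^\top$ in block form where $\bfp = (\bfP_{(n-1)k+1}^\top,\dots,\bfP_{nk}^\top)^\top$ is a block column, and the last row/column ($i=nk+1$ or $j=nk+1$) carries $-\bfP_j^\top$ and $-\bfP_i$ respectively with a zero in the corner. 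Under the hypothesis $\|\bfM_i\bfM_i^\top\|_2 = \|\bfM_i\|_2^2 \le 1$, each partial product satisfies $\|\bfP_i\|_2 \le 1$. Then each block has norm at most $1$, there are at most $k+1$ nonzero blocks per row within a block-row of this sub-block, so by a standard bound (e.g. the block-norm version of $\|X\|_2 \le \sqrt{\|X\|_1\|X\|_\infty}$, or Gershgorin applied blockwise) the single block has $2$-norm at most $k+1$.

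Tightening this to $k + 2\sqrt{k}$ (so that the total eigenvalue bound becomes $1 + k + 2\sqrt{k} = k+1+2\sqrt{k} = (\sqrt{k}+1)^2$) requires exploiting the outer-product structure more carefully rather than a crude row-sum bound: the $k\times k$ piece, being (in block form) $\bfp\bfp^\top$ with $\|\bfp\|_2^2 = \sum \|\bfP_i\|_F$-type quantity $\le k$... actually one should treat the block $\begin{pmatrix}\bfp\bfp^\top & -\bfp \\ -\bfp^\top & \mathbf 0\end{pmatrix}$ as a rank-$2s$ (per the outer-product-plus-border) perturbation and compute its nonzero eigenvalues explicitly via a $2\times 2$ reduction in the variables $\alpha = $ (component along $\bfp$) and the border component; the resulting $2\times2$ block has eigenvalues that are roots of something like $\mu^2 - \|\bfp\|^2\mu - \|\bfp\|^2 = 0$, giving $\mu \le \tfrac{1}{2}(k + \sqrt{k^2+4k}) \le k + \sqrt{k}$, and then the overlap between two consecutive blocks at most doubles the $\sqrt{k}$ correction, yielding $k + 2\sqrt{k}$.

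The main obstacle I anticipate is handling the overlap between consecutive diagonal blocks rigorously: the blocks of $\bfA(\bfM)$ genuinely share the $(nk+1,nk+1)$ position, so the matrix is not exactly block diagonal in the fine partition, and one must argue that coupling two blocks of norm $\approx k+\sqrt k$ through a single shared (zero!) diagonal entry does not blow up the spectral norm beyond $k + 2\sqrt k$. I would address this by writing $\bfA(\bfM) = \bfA_{\text{odd}} + \bfA_{\text{even}}$, splitting the overlapping blocks into two genuinely block-diagonal matrices (grouping alternate blocks), bounding each by the single-block estimate $k+\sqrt k$, and using $\|\bfA(\bfM)\|_2 \le \|\bfA_{\text{odd}}\|_2 + \|\bfA_{\text{even}}\|_2$; this costs a factor $2$ on the $\sqrt k$ term but, because the shared entry is zero, one can in fact be more careful and keep the clean bound. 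Getting the constant exactly $2\sqrt k$ rather than something slightly larger is the delicate point, and may require the explicit $2\times2$ eigenvalue computation above rather than triangle-inequality splitting.
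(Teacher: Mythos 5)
Your structural reading of $\bfA(\bfM)$ is essentially right: each diagonal block is a bordered Gram matrix, the $ks\times ks$ part being $\bfP^\top\bfP$ for a block row $\bfP=(\bfP_{(n-1)k+1},\dots,\bfP_{nk})$ of partial model products with $\|\bfP_i\|_2\le 1$, and your $2\times2$ reduction for a single whole block, giving roots of $\mu^2-\|\bfp\|^2\mu-\|\bfp\|^2=0$ and hence $\mu\le\tfrac{1}{2}\bigl(k+\sqrt{k^2+4k}\bigr)\le k+\sqrt{k}$, is correct. The gap is in the assembly step that you yourself flag as delicate, and your proposed fix does not close it. Splitting the \emph{whole} bordered blocks into odd and even families and applying the triangle inequality gives $\|\bfA_{\mathrm{odd}}\|_2+\|\bfA_{\mathrm{even}}\|_2\le k+\sqrt{k^2+4k}$, which behaves like $2k$, not $k+2\sqrt{k}$; the loss comes from counting the order-$k$ Gram part of every block twice, and the observation that the shared $(nk+1,nk+1)$ entry is zero cannot repair a bound that is already too large before any overlap is considered. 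Your claim that the overlap "at most doubles the $\sqrt{k}$ correction" is precisely the statement that needs proof, and the route you sketch does not deliver it.

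The repair — and this is what the paper does — is to decompose by \emph{rows and columns} rather than by whole blocks: write $\bfA(\bfM)=\bfA_1+\bfA_2+\bfA_3$, where $\bfA_1$ collects all the $ks\times ks$ Gram parts (these occupy pairwise disjoint index ranges over $n$, so $\bfA_1$ is genuinely block diagonal and $\lambda_{\max}(\bfA_1)\le\max_n\lambda_{\max}(\bfP\bfP^\top)\le k$ with no doubling), while $\bfA_2$ and $\bfA_3$ contain only the border rows/columns $i=nk+1$ or $j=nk+1$ for odd and even $n$ respectively. Each border block has the form $\bigl(\begin{smallmatrix}\mathbf{0}&-\bfp\\-\bfp^\top&\mathbf{0}\end{smallmatrix}\bigr)$ with eigenvalues $\pm\sigma_i(\bfp)\le\|\bfp\|_2\le\sqrt{k}$, so $\lambda_{\max}(\bfA_2),\lambda_{\max}(\bfA_3)\le\sqrt{k}$, and Weyl's inequality for sums of symmetric matrices yields $\lambda_{\max}(\bfA(\bfM))\le k+2\sqrt{k}$. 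In this decomposition only the $\sqrt{k}$-sized border terms pay the odd/even doubling, which is exactly where the constant $2\sqrt{k}$ comes from.
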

	\begin{proof}
		We bound the eigenvalues of $\bfA(\bfM)$ by splitting the matrix into three sub-matrices $\bfA(\bfM)= \bfA_1+\bfA_2+\bfA_3$, where $\bfA_1,\bfA_2,\bfA_3$ are symmetric and will be defined explicitly in what follows.
		As all matrices being considered are symmetric, using \cite[Fact 5.12.2]{Bernstein} we can bound the maximum eigenvalue of $\bfA(\bfM)$ above by $\lambda_{\max}(\bfA(\bfM))\le \lambda_{\max}(\bfA_1)+\lambda_{\max}(\bfA_2)+\lambda_{\max}(\bfA_3).$
		
		Let $\bfA_1$ be a block diagonal matrix  with blocks of size $nk\times nk$, with entries defined by:
		\begin{equation*}
			\text{the } (i,j)\text{th block of } \bfA_1 = \begin{cases}
				\text{the } (i,j)\text{th block of } \bfA(\bfM)  & \text{for } (n-1)k+1\le i,j\le nk,\\
				\mathbf{0} & \text{otherwise}.
			\end{cases}
		\end{equation*}
		Each $ks\times ks$ block has rank $s${, as the first $(k-1)s$ rows are multiples of the final $s\times s$ rows. Substitution yields the eigenvalue problem 
			$$ \left(\bfM_{nk}^\top\sum_{t=(n-1)k+1}^{nk} \left(\prod_{p=t}^{nk} \bfM_{nk-p+t}\right)\left(\prod_{q=t}^{nk-1} \bfM_q^\top \right)\right) \bv = \mu \bv.$$
			We apply \cite[Theorem 1.3.20]{horn_johnson_1985}, which states that exchanging the order of matrix multiplication for two compatible matrices has no effect on the non-zero eigenvalues of the product, so we instead consider}
		\begin{equation*}
			\left(\sum_{t=(n-1)k+1}^{nk} \left(\prod_{p=t}^{nk} \bfM_{nk-p+t}\right)\left(\prod_{q=t}^{nk} \bfM_q^\top \right)\right) { \bar{\bv}} = \mu { \bar{\bv}}.
		\end{equation*}
		We can separate the contribution of each individual term by applying  \cite[Fact 5.12.2]{Bernstein} 
		to obtain 
		\begin{align*}
			\mu &\le \lambda_{\max}  \left(\sum_{t=(n-1)k+1}^{nk} \left(\prod_{p=t}^{nk} \bfM_{nk-p+t}\right)\left(\prod_{q=t}^{nk} \bfM_q^\top \right)\right) \\
			& \le \sum_{t=(n-1)k+1}^{nk}  \prod_{p=t}^{nk}\|\bfM_p\bfM_p^\top\|_2
			\le\sum_{t=(n-1)k+1}^{nk} 1 = k.
		\end{align*}
		The maximum eigenvalue of each block is bounded above by $k$, and hence $\lambda_{\max}(\bfA_1) \le k$.
		
		Without loss of generality, assume that $\lf{\frac{N}{k}}\rf$ is odd. 
		For $n = 1,3,5,\dots , 2\lf{\frac{N}{2k}}\rf+1$ we define $\bfA_2$ by a block diagonal matrix, with blocks of size $(k+1)s\times(k+1)s$ and entries given by  
		\begin{equation*}
			[\bfA_2]_{i,j}
			= \begin{cases}
				\text{the } (i,j)\text{th block of } \bfA(\bfM) & \text{for } i=nk+1, (n-1)k+1\le j \le nk,\\
				\text{the } (i,j)\text{th block of } \bfA(\bfM)  
				& \text{for } j=nk+1, (n-1)k+1\le i \le nk,\\
				\mathbf{0} & \text{otherwise.}
			\end{cases}
		\end{equation*}

		The $(k+1)s\times(k+1)s$ blocks have rank $2s$ with non-zero eigenvalues that solve
		\begin{equation*}
			\left(\sum_{t=(n-1)k+1}^{nk} \left(\prod_{p=t}^{nk} \bfM_{nk-p+t}\right)\left(\prod_{q=t}^{nk} \bfM_q^\top \right)\right) \bv = \mu^2 \bv.
		\end{equation*}
		
		By the same argument as above we can bound $\mu^2 \le k$. 
		Hence $\lambda_{\max}(\bfA_2) \le \sqrt{k}$.
		
		For $n = 2,4,6,\dots,  2\lf{\frac{N}{2k}}\rf$, $\bfA_3$ is a  block diagonal matrix with blocks of size $(k+1)s\times (k+1)s$ and entries given by 
		\begin{equation*}
			[\bfA_3]_{i,j}= \begin{cases}

				\text{the } (i,j)\text{th block of } \bfA(\bfM) & \text{for } i=nk+1, (n-1)k+1\le j \le nk-1,\\
				
				\text{the } (i,j)\text{th block of } \bfA(\bfM) & \text{for } j=nk+1, (n-1)k+1\le i \le nk-1,\\
				\mathbf{0} & \text{otherwise.}
			\end{cases}
		\end{equation*}
		
		All blocks have the same structure as the blocks of $\bfA_2$, and hence have eigenvalues bounded above by $\sqrt{k}$.

		The largest eigenvalue of $\bfA(\bfM) $ is therefore bounded above $\lambda_{\max}(\bfA(\bfM))\le k + 2\sqrt{k}$. By adding the identity matrix, we obtain the upper bound on the eigenvalues in the proposition statement.
	\end{proof}

	\begin{remark}
		For smaller values of { $N$}, $\bfA_3$ does not enter the working. We can therefore  apply a similar argument with $\bfA = \bfA_1+\bfA_2$ to obtain the tighter bounds { $\lambda(\LM^{-\top}\bfL^\top\bfL\LM^{-1}) \le  1+k+\sqrt{k} ~\emph{ if } k \le { N} < 2k,$ that is $\lf{\frac{N}{k}}\rf = 1$.}
	\end{remark}
	\begin{remark}
		A similar approach is not illustrative when examining a lower bound for the eigenvalues, as this would yield negative bounds, whereas all eigenvalues of $\LM^{-\top}\bfL^\top\bfL\LM^{-1}$ are clearly greater than zero by construction.
	\end{remark}

	\section{{ Approximations} $\widehat{\bfR}_i$}\label{sec:Rprecond}
	Theorem \ref{theorem:BDPrec} suggests that the eigenvalues of the preconditioned system are also influenced by the quality of the approximation of $\widehat{\bfR}$ to $\bfR$ in the block diagonal preconditioner. In this section we consider four choices of $\widehat{\bfR}_i$, {{each of which we apply blockwise to $\bfR_i$}}. Similarly to the preconditioners for $\bfL$, we consider an existing choice of preconditioner that is diagonal, and three new choices of preconditioner that include correlation information.
	We expect the new preconditioners to be beneficial for highly correlated observation error covariance matrices.
	{{Correlated observation errors are currently implemented operationally at a number of numerical weather prediction centres for hyperspectral satellite instruments (see e.g. \cite{weston14,tandeo2020review}) and Doppler Radar Winds (DRW) (e.g. \cite{simonin2019pragmatic}). Hyperspectral instruments have a block covariance structure with a number of highly correlated off-diagonal entries, and DRW error statistics are spatially correlated.}} 
	We focus on the observation error covariance matrix in this section as efficient approximations for this term  have previously been overlooked. In the following we describe the approaches in terms of $\bfR$ without loss of generality, as the methods can also be applied to approximate the blocks of $\bfD$. 
	
	\begin{remark}
		{\rev{We note that $\bfR$ has a block diagonal structure. Each of the preconditioners in this section is applied blockwise, yielding a block diagonal $\widehat{\bfR}$ with at least $N+1$ blocks. The two preconditioners presented in Section~\ref{sec:parallelR} \emph{further} increase the {sparsity} of $\widehat{\bfR}$.}}
	\end{remark}
	
	\subsection{Choices of $\widehat{\bfR}_i$ which increase {sparsity}}\label{sec:parallelR}
	Many previous studies of the saddle point data assimilation formulation assume that $\bfR$ is diagonal or easy to invert \cite{gratton2018guaranteeing,fisher2018low}. {{For those instruments with uncorrelated or diagonally dominant observation error covariance matrices, it is likely that a diagonal approximation $\widehat{\bfR}$ will be sufficient.}} However for more complicated structures, it is unlikely that the exact inverse of $\bfR$ can be applied efficiently in terms of storage or computation. {{The approaches we present here are designed to perform well in terms of effectiveness and efficiency for correlation structures that are currently used operationally.}}
	
	The first two choices of $\widehat{\bfR}_i$ considered in this section  allow for additional {sparsification}  {\rev{of the observation error component of the preconditioners}}. The first preconditioner, denoted $\bfR_{diag}$, takes the diagonal of the original observation error covariance matrix $\bfR_i$. This is often a first approximation of $\bfR_i$ or its inverse for simple covariance structures. This choice of $\widehat{\bfR}_i$ is cheap to apply and {extremely sparse}, but is expected to perform poorly if there is significant off-diagonal correlation structure.

	The second {sparsifying}  choice of $\widehat{\bfR}_i$, denoted $\bfR_{block}$ in Algorithm \ref{alg:Block} { in the Appendix}, is designed to exploit existing block structure in $\bfR_i$. In applications, $\bfR_i$ itself often has a block structure, with the strength of off-diagonal correlations varying, e.g. for different instruments or measurement types {{(see e.g. \cite{weston14})}}. The idea of $\bfR_{block}$ is to retain the sub/super-diagonal blocks of $\bfR_i$ with the largest norm. Neglecting off-diagonal blocks of $\bfR_i$ with smaller norm ensures that $\bfR_{block}$ is decoupled into a block diagonal matrix. 
	Let $\bfR_i\in\mathbb{R}^{p_i\times p_i}$ be a covariance matrix with an associated vector $\texttt{pvec}\in\mathbb{R}^{p_n}$ that specifies the size of `blocks' such that  $\sum_{k=1}^{p_n} \texttt{pvec}_k= p_i$.
	Algorithm \ref{alg:Block} returns a block covariance matrix $\bfR_{block}$ where only off-diagonal blocks with scaled Frobenius norm larger than a user-defined tolerance $\texttt{tol}$ are retained. In order to maximise computational efficiency only norms of the blocks on the first super-diagonal are computed. If two (or more) adjacent blocks are retained, information from higher level super-diagonals is also included. This does not change the {overall} block structure of the new preconditioner, but allows for the inclusion of more information from $\bfR_i$. This will not lead to a large increase in the cost of applying its inverse and we deem that retaining the additional correlation information is likely to be beneficial for the preconditioner.

	\subsection{Preconditioning methods motivated by reconditioning}
	
	The next two choices of $\widehat{\bfR}_i$ are motivated by reconditioning methods \cite{tabeart2020improving}. These are commonly used in data assimilation implementations to mitigate the issues associated with ill-conditioned sample covariance matrices \cite{weston14}. {{These methods do not increase the sparsity of $\widehat{\bfR}_i$ compared to $\bfR_i$, but can be applied to non-block matrices, such as the spatially varying error covariances used for Doppler Radar Winds.}} In this application we consider the use of such methods to develop new terms in the preconditioner only.
	
	\begin{algorithm}[ht]
		\caption{Ridge regression method\label{alg:RR}}
		
		{Inputs: Matrix $\bfR_i$, target condition number $\kappa_{\max}$.}\\
		{Define $\gamma = \frac{\lambda_{\max}(\bfR_i) - \lambda_{\min}(\bfR_i)\kappa_{\max}}{\kappa_{\max}-1} $. \\}
		{Set $\Rp = \bfR_i+\gamma\mathbf{I}$.}
		
	\end{algorithm}
	
	\begin{algorithm}[ht]
		\caption{Minimum eigenvalue method\label{alg:ME}}
		{Inputs: Matrix $\bfR_i = \bfV\bfLam\bfV^\top$, target condition number $\kappa_{\max}$.\\
			Set $\lambda_{\max}(\Rm) = \lambda_{\max}(\bfR_i)$.\\}
		{Define $T=\frac{\lambda_{\max}(\bfR_i)}{\kappa_{\max}}>\lambda_{\min}(\bfR_i)$.\\}
		{ Set the remaining eigenvalues of $\Rm$ via	\begin{equation*}
				\lambda_{k}(\Rm)= 
				\begin{cases}
					\lambda_k(\bfR_i) & \text{if } \lambda_k(\bfR_i)>T,\\
					T & \text{if } \lambda_k(\bfR_i) \le T. \\
				\end{cases}
			\end{equation*}\\}
		{Construct the reconditioned matrix via $\Rm = \textbf{V}\bfLam_{ME}\textbf{V}^\top$, where $\bfLam_{ME}$ is a diagonal matrix with diagonal entries given by $\lambda_k(\Rm)$.}\\
	\end{algorithm}
	
	Algorithms \ref{alg:RR} and \ref{alg:ME} define parameter-dependent preconditioners. Typically in the reconditioning setting $\gamma$ and $T$ are selected such that reconditioned matrices have condition number $\kappa_{\max}$.  However, in the preconditioning approach we select $\gamma$ and $T$ directly, with larger parameter values yielding smaller condition numbers of $\widehat{\bfR}_i$. 
	
	Algorithms \ref{alg:RR} and \ref{alg:ME} can be used to construct preconditioners for $\bfR_i$ that retain much of the structure of the original matrix, but are better conditioned. Additionally, we can prove how the eigenvalues of the preconditioned term that appears in Theorem \ref{theorem:BDPrec} change as we vary $\gamma$ and $T$, respectively. The following { results} determine the spectra of the preconditioned terms $\bfR_{RR}^{-1}\bfR_i$ and $\bfR_{ME}^{-1}\bfR_i$  for any choice of parameters $\gamma$ and $T$. 
	
	\begin{prop}
		Let  $\lambda_k(\bfR_i)$ denote the eigenvalues of $\bfR_i$. The eigenvalues of $\bfR_{RR}^{-1}\bfR_i$ are given by
		\begin{equation*}
			\lambda_k(\bfR_{RR}^{-1}\bfR_i) = \frac{\lambda_k(\bfR_i)}{\lambda_k(\bfR_i)+\gamma}.
		\end{equation*}
	\end{prop}
	\begin{proof}
		Let $\bfR_i = \bfV\bfLam\bfV^\top$, where $\bfLam = \texttt{diag}(\lambda_k)$, be the eigendecomposition of $\bfR_i$. Then
		\begin{equation*}
			\bfR_{RR}  =      \bfV(\bfLam+\gamma\mathbf{I})\bfV^\top
		\end{equation*}
		and hence
		\begin{equation*}
			\bfR_{RR}^{-1}\bfR_i = \bfV\left(\texttt{diag}\left(\frac{\lambda_k}{\lambda_k+\gamma}\right)\right)\bfV^\top.
		\end{equation*}
	\end{proof}
	
	We note that for any value of $\gamma$, $0<\lambda(\bfR_{RR}^{-1}\bfR_i) <1$. For small values of $\gamma$ all eigenvalues are closer to $1$ and as $\gamma$ increases, more eigenvalues move towards zero.

	\begin{prop}
		The eigenvalues of $\bfR_{ME}^{-1}\bfR_i$ are given by \begin{equation*}
			\lambda_k(\bfR_{ME}^{-1}\bfR_i) = \begin{cases}
				1 & \emph{if } \lambda_k>T,\\
				\frac{\lambda_k}{T}<1 & \emph{if } \lambda_k \le T.
			\end{cases}
		\end{equation*}
	\end{prop}
	\begin{proof}
		Let $\bfR_i = \bfV\bfLam\bfV^\top$, where $\bfLam = \texttt{diag}(\lambda_k)$, be the eigendecomposition of $\bfR_i$. Then
		\begin{equation*}
			\bfR_{ME}  =      \bfV\bfLam_{ME}\bfV^\top,
		\end{equation*}
		where $\bfLam_{ME}$ is a diagonal matrix with diagonal entries given by $\max\{T,\lambda_k(\bfR_i)\}$.
		Hence 
		\begin{align*}
			\bfR_{ME}^{-1}\bfR_i  &=      \bfV\left(\texttt{diag}\left(\frac{\lambda_k}{\max\{T,\lambda_k\}}\right)\right)\bfV^\top,
		\end{align*}
		which has eigenvalues corresponding to the expression in the theorem statement.
	\end{proof}
	
	All of the eigenvalues of $\bfR_{ME}^{-1}\bfR_i$ lie in the range $(0,1]$. For small values of $T$ most of the eigenvalues are units, with a few smaller than $1$. As $T$ increases a larger number of eigenvalues become strictly smaller than $1$.

	Therefore both choices of preconditioner maintain the ordering of eigenvalues and yield eigenvalues of the preconditioned matrix lying between $0$ and $1$. For both approaches increasing the parameter leads to larger differences between the {eigenvalues of the preconditioned matrix} and 1. Smaller parameter values may yield choices of $\widehat{\bfR}$ that are {{themselves}} ill-conditioned, and hence expensive to evaluate as part of a preconditioner. {Therefore, there is a balance to be struck when choosing a parameter value to avoid poor conditioning of either the preconditioner $\widehat{\mathbf{R}}$ or the preconditioned term $\widehat{\mathbf{R}}^{-1}{\mathbf{R}}$.}
	A natural question is therefore how to select appropriate parameter values, and how to implement this automatically. Heuristics for automated parameter selection are discussed in Section \ref{sec:NumFramework} for our numerical experiments, but it is likely that some initial investigation would be necessary to identify suitable methods for specific problems of interest. 
	In Section \ref{sec:NumExpt} we compare the performance of the four approximations $\bfR_{diag}$, $\bfR_{block}$, $\bfR_{RR}$, and $\bfR_{ME}$, {{applied block-wise to each block $\bfR_i$ of $\bfR$}}.

	\section{Numerical framework}\label{sec:NumFramework}
	
	In this section we introduce the numerical framework for the experiments presented in Section \ref{sec:NumExpt}.  
	We begin in Section \ref{sec:DAparams} by defining the parameters for the data assimilation problem. The same data assimilation framework is used for all experiments. 
	{\rev{In Section \ref{sec:NLAimplementation} we discuss implementation aspects relating to the preconditioners. }}
	We note that all results are computed using {\scshape Matlab} version 2019b on a machine an a 1.8GHz Intel  Intel quad-core i7 processor with 15GB RAM on an Ubuntu 20.04.2 LTS operating system.

	\subsection{Data assimilation parameters}\label{sec:DAparams}
	We now describe the data assimilation problem that is studied in Section \ref{sec:NumExpt}.
	The size of the state space $s$ is determined by the spatial discretisation $s=\frac{1}{\Delta x}$. 
	For each choice of $s$ we fix the observation operator $\bfH_i \in \mathbb{R}^{p\times s}$ to be the same for all observation times, $i$.  The entries of $\bfH_i$ are all 0 or 1, and each row has a single unit entry. The columns containing units are randomly selected for each choice of $\Delta x$ and ordered. {\rev{We choose}} $p=\frac{s}{2}$, and observations of alternate state variables are smoothed equally over 5 adjacent state variables, with entries either being $0$ or $\frac{1}{5}$. The full observation operator $\bfH$ is then assembled by taking $\bfH = \bfI_{N+1}\otimes \bfH_i$, where $\otimes$ denotes the Kronecker product.
	
	We assume that the model error $\bfQ_i$ is the same at each observation time, i.e. $\bfQ_i \equiv \bfQ$ for $i=1,\dots,N$.
	{\rev{Although this is a simplified choice of model error, we note that treating $\bfQ_i$ is not the focus of this work. More complicated formulations could be taken into account for operational problems, and the preconditioning approaches for $\bf{R}$ discussed in Section \ref{sec:Rprecond} can also be applied to the blocks of $\bfQ$. }} Both $\bfB$ and $\bfQ$ 
	are created using the same routine, based on a SOAR correlation matrix \cite{Yaglom87}. This routine constructs spatial local correlations whilst ensuring that the matrix has high sparsity. 
	Both $\bfB$ and $\bfQ$ are $s\times s$ circulant matrices fully defined by a single row. The number of non-zero entries in each row is fixed irrespective of the value of $s$. The non-zero entries are computed using a modified SOAR function following the procedure in \cite{haben11c}:
	\begin{equation}\label{eq:SOAR}
		c_i = \sigma\left(1+\frac{2|\sin\left(\frac{i\theta}{2}\right)|}{L}\exp\left(-\frac{2|\sin(\frac{i\theta}{2})|}{L}\right)\right), \qquad \theta = \frac{\pi}{\texttt{maxval}},
	\end{equation}
	where $L$ is the correlation lengthscale (0.6 for $\bfB$, 0.5 for $\bfQ$), $\texttt{maxval}$  determines the number of non-zero entries ($100$ for $\bfB$, $120$ for $\bfQ$), and $\sigma$ is the amplitude of the correlation function (0.4 for $\bfB$, 0.2 for $\bfQ$). To ensure positive definiteness of the function, if the smallest eigenvalue of the full matrix is negative a constant $\delta = |\lambda_{\min}(\bfB)|+\psi$ is added to the diagonal, where $\psi$ is a random number in $[0,0.5]$. {{We then assemble $\bfD$ by taking the first block diagonal entry to be $\bfB$ and the remaining block diagonal entries to be $N$ copies of $\bfQ$.}}
	
	The spatial structure for $\bfB$ and $\bfQ$ introduced above is similar to numerical frameworks that have been considered previously for weak-constraint data assimilation experiments \cite{dauvzickaite2020spectral,el2015conditioning}. However, we note that for realistic data assimilation systems the true structure of $\bfQ_i$ is not well known. Improved understanding of model error covariance matrices, and their estimation in preconditioners, is of research interest, but will not be considered here. {{For our experiments we apply the ridge regression preconditioner to $\bfB$ and $\bfQ$ given by Algorithm 2, for the inflation parameter $\delta =0.01$ to obtain $\bfB_{RR}$ and $\bfQ_{RR}$.  We apply $\widehat{\bfD}^{-1}$ using the incomplete Cholesky factors of $\bfB_{RR}$ and $\bfQ_{RR}$, respectively, computed with the {\scshape Matlab} function $\texttt{ichol}$ with zero-fill, i.e. using the same sparsity structure as $\bfB$ and $\bfQ$.}}

	Many experiments that account for correlated observation error covariance matrices use spatial correlations and circulant matrix structures (similar to those we are using for $\bfB,\bfQ$). However, in NWP error correlations often arise from hyperspectral satellite-based instruments which have interchannel uncertainty structures (see for instance \cite{weston14,stewart08}), which appear as block structures within a matrix. Therefore for these experiments we construct a matrix with block structure, designed to replicate many of the properties of realistic interchannel error correlations. 
	
	To construct $\bfR_i\in \mathbb{R}^{p\times p}$ we define two vectors: $\texttt{pvec} \in \mathbb{R}^{\texttt{plen}}$ such that $p = \sum_{k=1}^{\texttt{plen}} \texttt{pvec}_k$ which gives the size of the blocks, and $\texttt{pcorr}\in\mathbb{R}^{\texttt{plen}(\texttt{plen}-1)/2}$ which gives the a multiplication factor for each off-diagonal block (note if $\texttt{pcorr}_k = 0$ the corresponding off-diagonal block is uncorrelated). Diagonal blocks are correlated and constructed as the Hadamard product of a sparse random matrix and a sparse SOAR matrix (using the same approach as for $\bfB$ and $\bfQ$ above). Off-diagonal blocks are sparse random matrices with entries in $(0,\texttt{pcorr}_k)$. The matrix $\bfR_i$ is assembled by adding the diagonal blocks and upper half of the matrix and then symmetrising. 
	This also increases the weight of the diagonal blocks. A similar approach to that used for $\bfB$ and $\bfQ$ is applied to guarantee that $\bfR_i$ is positive definite, where if the minimum eigenvalue of $\bfR_i $ is less than $1$ it is increased to a small positive value. This value is fixed at $0.41$, in order to control the conditioning of $\bfR_i$, and ensure that the condition numbers of $\bfD$ and $\bfR$ are comparable.  {{We  assemble $\bfR$ by taking $\bfR = \bfI_{N+1}\otimes \bfR_i$, where $\otimes$ denotes the Kronecker product and the choice of $\bfR_i\in \mathbf{R}^{p\times p}$ is fixed for a given data assimilation problem.}} This method of construction ensures we have adequate sparsity for high-dimensional experiments, that $\bfR_i$ is positive definite with significant correlations, and that it is well-conditioned. In practice $\bfR$ may be very ill-conditioned compared to $\bfD$, in which case we expect selecting a good choice of $\widehat{\bfR}$ would be even more vital to ensure fast convergence. {{The combination of model and data assimilation parameters used in our experiments is summarised in Table~\ref{tab:ExperimentalDesign}.}}
	
	\begin{table}[]
		\begin{small}
			\centering
			\begin{tabular}{c|ccccc}
				Experiment  & Model & $\bfD$ & $\bfR_i$ & $\bfH$& $p/s$ \\
				\hline
				A & Lorenz 96 & Modified SOAR & Block & Direct observations &0.5\\
				B& Heat equation & Modified SOAR  &Block & Direct observations & 0.25 \\
			\end{tabular}
			\caption{Summary of experimental design for Section \ref{sec:NumExpt}. The modified SOAR function is described in \eqref{eq:SOAR}, the block method to construct $\bfR_i$ is described in Section \ref{sec:DAparams}, as is the direct observation structure of $\bfH$. The final column shows the ratio of observations to state variables at each observation time.}
			\label{tab:ExperimentalDesign}
		\end{small}
	\end{table}

	Finally, we wish to apply the reconditioning-inspired preconditioner to $\bfR_i$ in an online way. 
	One way to do this is to use the small eigenvalues of $\bfR_i$ to select $\gamma $ and $T$. For the ridge regression approach, we set $\gamma = \lambda_{\min}(\bfR_i)$. For the minimum eigenvalue method we compute the smallest two eigenvalue--eigenvector pairs, and set the threshold equal to the second smallest, meaning only a single eigenvalue is changed. The small eigenvalues are computed using \texttt{eigs(R,1,'sr')} in {\scshape Matlab}. For both approaches we use information from $\bfR_i$, but computing a small number of eigenvalues ensures this is computationally efficient. Finally, for all correlated choices of $\widehat{\bfR}_i$ we apply $\widehat{\bfR}_i^{-1}$ using the incomplete Cholesky factors computed with the {\scshape Matlab} function $\texttt{ichol}$ with the same sparsity structure as $\bfR_i$. For $\bfR_{ME}$ this means the reconditioning method is applied as a low-rank update to the Cholesky factors via the Woodbury identity as this is more efficient in terms of storage.

	\begin{table}[]
		\begin{small}
			\centering
			\begin{tabular}{c|cc|cc|cc|cc|c}
				&\multicolumn{2}{c|}{$\bfR_i$}&\multicolumn{2}{c|}{$\bfR_{diag}^{-1}\bfR_i$}&\multicolumn{2}{c|}{$\bfR_{block}^{-1}\bfR_i$}&\multicolumn{2}{c|}{$\bfR_{RR}^{-1}\bfR_i$}&$\bfR_{ME}^{-1}\bfR_i$\\
				$p$ &$\lambda_{\min}$ &$\lambda_{\max}$  &$\lambda_{\min}$ &$\lambda_{\max}$& $\lambda_{\min}$ &$\lambda_{\max}$ & $\lambda_{\min}$ & $\lambda_{\max}$ & $\lambda_{\min}$\\ \hline
				125 &0.4100& 108.63 & 0.0131& 2.7466 & 0.6779 & 1.3221& 0.2908 & 1.0000 & 0.2602 \\
				250 & 0.4100 & 200.55 & 0.0077 & 3.3466 & 0.7886 & 1.2114 & 0.2908 & 1.0000 & 0.6021\\
				500 &0.4100& 312.05 & 0.0061 & 4.4236& 0.8889 & 1.1110 & 0.2908 & 1.0000 & 0.1129 \\
				1250 &0.4100& 540.99 & 0.0044 &5.5440& 0.9335 & 1.0665 & 0.2908 & 1.0000 & 0.1077 \\
				2500 &0.4100& 650.67 & 0.0055 &6.0254& 0.7556 & 1.2444 & 0.2908& 1.0000 & 0.1302\\
				5000 &0.4100& 706.94 & 0.0057 &6.8106 & 0.6058 & 1.3941 & 0.2908 & 1.0000& 0.1725 
			\end{tabular}
			\caption{Minimum and maximum eigenvalues of preconditioned $\bfR_i$ for different choices of $\widehat{\bfR}_i$ computed using the $\texttt{eigs}$ function in {\scshape Matlab}.}
			
			\label{tab:RtildeReigs}
		\end{small}
	\end{table}
	
	Table~\ref{tab:RtildeReigs} shows the extreme eigenvalues of $\widehat{\bfR}_i^{-1}\bfR_i$ for each of the preconditioners discussed in Section \ref{sec:Rprecond}.  We recall that the maximum eigenvalue of $\bfR_{ME}^{-1}\bfR_i$ is $1$ by definition. We fix the smallest eigenvalue of $\bfR_i$, $\lambda_{\min}(\bfR_i)=0.41$, to ensure that $\bfR_i$ is well-conditioned. We see that the maximum eigenvalue of $\bfR_i$ increases with $p$. Eigenvalues are more extreme for $\bfR_{diag}^{-1}\bfR_i$ than for any other preconditioned matrix. Including correlation information is beneficial in terms of the extreme eigenvalues. As the parameter choice for $\bfR_{RR}$ only depends on the smallest eigenvalue of $\bfR_i$ (which is fixed), the minimum and maximum eigenvalues of $\bfR_{RR}^{-1}\bfR_i$ do not change with increasing $p$. The block approach clusters both minimum and maximum eigenvalues of $\bfR_{block}^{-1}\bfR_i$ either side of $1$, whereas the reconditioning approaches { lead to a maximum eigenvalue of the preconditioned matrix} which is very close or equal to $1$. For the approach used here, where $T$ is set to the second smallest eigenvalue of $\bfR_i$, all eigenvalues bar the smallest of $\bfR_{ME}^{-1}\bfR_i$ are equal to $1$. 
	
	{\rev{\subsection{Aspects of numerical linear algebra implementation}\label{sec:NLAimplementation}
			In this section we briefly discuss some of the numerical linear algebra aspects of our implementation to run the numerical experiments in Section~\ref{sec:NumExpt}.
			
			We take advantage of the specific matrix structures in \eqref{eq:saddlepointA}, and store only the non-zero blocks of the matrix, i.e. $\bfR_i\in\mathbf{R}^{p \times p}$, $\bfB,\bfQ_i\in\mathbf{R}^{s \times s}$ and $\bfH_i\in\mathbf{R}^{p \times s}$.  Each of these (relatively) small matrices is stored as a sparse matrix. We also precompute and store $\widehat{\bfR}_i\in\mathbf{R}^{p\times p}$ prior to the iteration of the Krylov subspace method.
			
			We compute the matrix--vector products $\mathbfcal{A}\mathbf{v}$, and the preconditioner solves $\mathbfcal{P}_D^{-1}\mathbf{v}$ and $\mathbfcal{P}_I^{-1}\mathbf{v}$, via the action of a matrix on a vector rather than building the full matrices. We now describe this process briefly. Note first that
			\begin{equation*}
				\mathbfcal{A}\mathbf{v} = \mathbfcal{A}
				\begin{pmatrix}
					\mathbf{v}_1 \\ \mathbf{v}_2\\ \mathbf{v}_3
				\end{pmatrix} := \begin{pmatrix}
					\mathbf{c}_1 \\ \mathbf{c}_2\\ \mathbf{c}_3
				\end{pmatrix} = \begin{pmatrix}
					\bfD\mathbf{v}_1 + \bfL\mathbf{v}_3\\
					\bfR\mathbf{v}_{2} + \bfH\mathbf{v}_3\\
					\bfL^\top\mathbf{v}_1 + \bfH^\top\mathbf{v}_2
				\end{pmatrix}.
			\end{equation*}
			We compute $\mathbf{c}_1,\mathbf{c}_2,\mathbf{c}_3$ separately by looping over the $N+1$ blocks of $\bfD$, $\bfR$, and $\bfH.$ The action of $\bfL$ and $\bfL^{-1}$ on a vector is applied via a function. We note that for each evaluation of the matrix--vector product we require one evaluation each of $\bfL^\top $ and $\bfL$.
			
			To apply the inverse of the block diagonal preconditioner to a vector, we have that
			\begin{equation*}
				\mathbfcal{P}_D^{-1}\mathbf{v} = \begin{pmatrix}
					\widehat{\bfD}^{-1} & \mathbf{0} & \mathbf{0} \\ \mathbf{0} & \widehat{\bfR}^{-1} & \mathbf{0} \\ \mathbf{0} & \mathbf{0} & \widehat{\bfS}^{-1}
				\end{pmatrix} \begin{pmatrix}
					\mathbf{v}_1 \\ \mathbf{v}_2 \\ \mathbf{v}_3 
				\end{pmatrix} = \begin{pmatrix}
					\widehat{\bfD}^{-1}\mathbf{v}_1\\ \widehat{\bfR}^{-1}\mathbf{v}_2 \\ \widehat{\bfL}^{-1}\bfD\widehat{\bfL}^{-\top}\mathbf{v}_3
				\end{pmatrix}.
			\end{equation*}
			We see that each application of $\mathbfcal{P}_D^{-1}$ to a vector requires one evaluation each with $\widehat{\bfD}^{-1},\widehat{\bfR}^{-1},\widehat{\bfL}^{-1}$, and $\widehat{\bfL}^{-\top}$. To apply $\widehat{\bfD}^{-1}$ and $\widehat{\bfR}^{-1}$ we loop over the $N+1$ blocks of the matrices, and for $\widehat{\bfL}^{-1}$ and $\widehat{\bfL}^{-\top}$ we loop over the $k$ blocks of the relevant sub-matrices.  
			Now,
			\begin{equation*}
				\mathbfcal{P}_I^{-1}\mathbf{v} = \begin{pmatrix}
					\mathbf{0} & \mathbf{0} & \widehat{\bfL}^{-\top} \\ \mathbf{0} & \widehat{\bfR}^{-1} & \mathbf{0} \\  \widehat{\bfL}^{-1} & \mathbf{0} & -\widehat{\bfS}^{-1}
				\end{pmatrix}\begin{pmatrix}
					\mathbf{v}_1 \\ \mathbf{v}_2 \\ \mathbf{v}_3
				\end{pmatrix} = \begin{pmatrix}
					\overbrace{\widehat{\bfL}^{-\top}\mathbf{v}_3}^{\mathbf{c}_1}\\
					\widehat{\bfR}^{-1}\mathbf{v}_2\\
					\widehat{\bfL}^{-1}(\mathbf{v}_1-\bfD\mathbf{c}_1)
				\end{pmatrix}.
			\end{equation*}
			By computing the first component of $ \mathbfcal{P}_I^{-1}\mathbf{v}$, $\mathbf{c}_1$, prior to the final component, we can apply the inexact constraint preconditioner with the same dominant costs-per-iteration as an application of the block diagonal preconditioner. This  saves the cost of a second application of $\widehat{\bfL}^{-\top}$. An additional advantage of the inexact constraint preconditioner is that it does not require the application of $\widehat{\bfD}^{-1}$ to a vector, and hence may result in significant computational savings for challenging choices of $\bfD$.

			We note that all of the experiments in Section~\ref{sec:NumExpt} run the Krylov subspace methods to convergence to a { relative} tolerance of $10^{-6}$. This is in contrast to most operational data assimilation implementations, where a small number of fixed iterations are applied. Due to the inclusion of the Lagrange multipliers in the residual of the saddle point formulation, we no longer have monotonic decrease of the objective cost. Running our experiments to convergence ensures a fair comparison between each preconditioner. We leave the investigation of the non-monotonicity for each preconditioner to future work.
			
	}}
	\section{Numerical experiments}\label{sec:NumExpt}
	In this section we present   numerical experiments for the two problems described in Section \ref{sec:NumFramework}. We use the MINRES implementation of \cite{website:MINRES} for the block diagonal preconditioner, with a residual-based convergence criterion in the two norm. 
	{ For the inexact constraint preconditioner we use the GMRES implementation of \cite{greif2011multi} with no restarts, and a convergence criterion given by the relative residual in the two norm.} We use a tolerance of $10^{-6}$ for both problems of interest. 
	
	{{ We note that  all of our experiments converge within the maximum number of iterations we allow ($1000$).}}

	\subsection{Lorenz 96 model}\label{sec:Lorenz96}
	The {\rev{main}} problem of interest concerns the Lorenz 96 model \cite{lorenz1996predictability}, a non-linear problem that is often considered as a test problem for data assimilation applications (see for example \cite{dauvzickaite2020spectral,green2019model} for use within the saddle point formulation for data assimilation). The Lorenz 96 model consists of $s$ coupled ordinary differential equations which are discrete in space and continuous in time. Consider $s$ equally spaced points on the unit line, i.e. $\Delta x = \frac{1}{s}$.  For $i=1,\dots,s$,
	\begin{equation*}
		\frac{{\rm d}x_i}{{\rm d}t} = (x_{i+1}-x_{i-2})x_{i-1}-x_i + 8,
	\end{equation*}
	where we have periodic boundary conditions (so $x_{-1} = x_{s-1}$, $x_{0}=x_{s}$, $x_{s + 1} = x_1$). The choice of forcing constant $F=8$ induces chaotic behaviour, and is typical for data assimilation applications. 
	We use the numerical implementation of \cite{el2015conditioning}, where the model is integrated in time with a fourth-order Runge--Kutta scheme. For all experiments we consider $N=15$ subwindows.  We consider $\Delta t = 10^{-4}$ for all the experiments, 
	although similar results were obtained for other values of $\Delta t$ and are not presented here. 
	
	As we are interested in assessing the performance of preconditioners within the linearised inner loops, we consider a single outer loop of the weak constraint formulation.  {{The Lorenz 96  example can  be considered as a study of how well the proposed preconditioners perform in a realistic setting where the linearised model operators $\bfM_i$  differ in each subwindow.}} { We note that the setting of of Proposition \ref{prop:evalsLML} holds}, and hence using $\LM$ guarantees that $\bfL_M^{-\top}\bfL^\top\bfL\bfL_M^{-1}$ possesses a number of unit eigenvalues depending on the choice of $k$. However, we cannot bound the maximum eigenvalue of $\bfL_M^{-\top}\bfL^\top\bfL\bfL_M^{-1}$ using the theory of Section \ref{sec:ApproxLhat}: the assumptions of Proposition \ref{prop:lowrankallM} are not satisfied as  $\lambda_{\max}(\bfM_{15}^\top\bfM_{15})>1$ for all choices of $\Delta x$ that were studied.

	\begin{table}[]
		\begin{small}
			\centering
			\begin{tabular}{c c|cccc}
				$\widehat{\bfL}$ &$\widehat{\bfR}$ & $\lambda_{\min}(\mathbfcal{P}_D^{-1}\mathbfcal{A})$& $\max(\lambda(\mathbfcal{P}_D^{-1}\mathbfcal{A})<0)$ &$\min(\lambda(\mathbfcal{P}_D^{-1}\mathbfcal{A})>0)$ &$\lambda_{\max}(\mathbfcal{P}_D^{-1}\mathbfcal{A})$ \\ \hline 
				$\bfL_M$ & $\bfR_{block}$&  $-15.3994$ &  $-0.1233$&    0.7886 &  16.4061\\
				$\bfL_M$ & $\bfR_{RR}$&   $-14.2002$  & $-0.0574$ &   0.2908  & 14.8536\\
				$\bfL_M$ & $\bfR$&  $-13.8536$  & $-0.1773$  &  1.0000 &  14.8536\\
				$\bfL_0$ & $\bfR$&  $-12.2360$  & $-0.0551$   & 1.0000&   13.2360\\
				\hline
				$\bfL_M$ & $\bfR_{block}$& $-3.5516$ &  $-0.1773$ &   0.8295 &   4.5535\\
				$\bfL_M$ & $\bfR_{RR}$& $-2.5351$  & $-0.1773$  &  0.3878  &  3.2438\\
				$\bfL_M$ & $\bfR$&$-3.6119$  & $-0.1773$ &   1.0000 &   4.6119\\
				$\bfL_0$ & $\bfR$&$-2.0415$ &  $-0.0551$ &   1.0000 &   3.0415\\
			\end{tabular}
			\caption{{\rev{Experiment A:}} (Top:) Bounds on negative and positive eigenvalues from Theorem \ref{theorem:BDPrec} 
				for $s=500$, $N=5$, $k=3$ {\rev{with parameters from Experiment A in Table \ref{tab:ExperimentalDesign}}}; 
				(Bottom:) Extreme negative and positive eigenvalues for this problem computed using the $\texttt{eigs}$ function in {\scshape Matlab}.}
			
			\label{tab:BoundsondiagI}
		\end{small}
	\end{table}
	
	Table~\ref{tab:BoundsondiagI} shows the values of the bounds from Theorem~\ref{theorem:BDPrec} {(top four rows)} and the computed eigenvalues {(bottom four rows)} when using $\mathbfcal{P}_D$ with $\bfL_M(3)$, and {a number of} choices for $\widehat{\bfR}$ introduced in Section~\ref{sec:Rprecond}.  We note that these experiments consider $\bfD =\mathbf{I}$, as the condition number of $\mathbf{D}$ appears in all terms in the bound. In the more realistic case where $\bfB,\bfQ_i \neq \mathbf{I}$ we expect the bounds given by Theorem~\ref{theorem:BDPrec} to be much weaker. We note that even with this choice of $\bfD$ the bounds provide pessimistic estimates of the eigenvalues.
	{ However, the qualitative behaviour of the bounds is similar to that of the computed eigenvalues for different choices of $\widehat{\mathbf{R}}$ and $\mathbf{L}_M$. The eigenvalues of smallest absolute value  are mainly affected by the choice of $\mathbf{L}_M$ (for the negative eigenvalue with largest magnitude) or $\widehat{\mathbf{R}}$ (for the smallest positive eigenvalue). This is not true for the bounds in the case of the negative eigenvalue with smallest magnitude. The computed eigenvalues of largest magnitude and the corresponding bounds are affected by changes to both $\mathbf{L}_M$ and $\widehat{\mathbf{R}}$, but these changes are rather small in all cases.} As the largest magnitude computed eigenvalues are small for this problem (all less than five), the improvements to the small magnitude eigenvalues with $\bfL_M$ and correlated choices of $\widehat{\bfR}$ are likely to have {the most significant} influence on the convergence of the iterative methods.

	\begin{figure}
		\centering
		\includegraphics[width=0.95\textwidth,clip,trim=0mm 0mm 20mm 3mm]{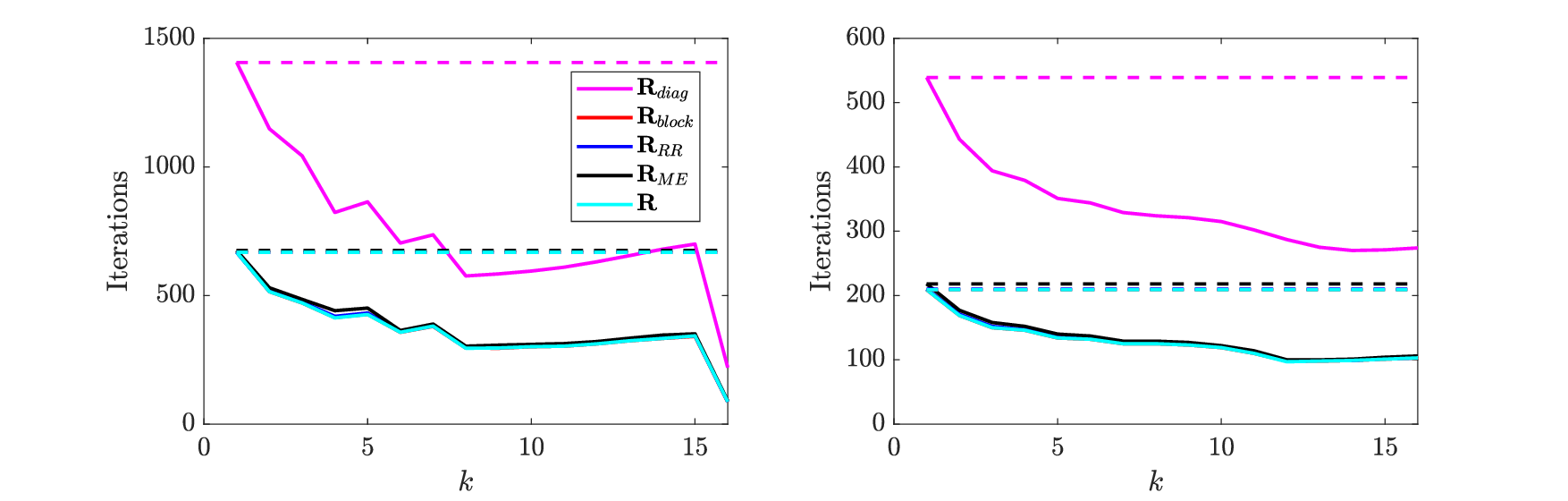}
		\caption{{{Experiment A}}: Number of iterations required for convergence of MINRES  for different choices of $\widehat{\bfR}$ within the block diagonal preconditioner $\mathbfcal{P}_D$ ({left}) and the  the inexact constraint preconditioner $\mathbfcal{P}_I$ ({right}) for the Lorenz 96 problem, with increasing $k$. The dimension of the problem is given by $\mathbfcal{A}\in\mathbb{R}^{100,000\times 100,000}$. The dashed line shows the results for $k=1$ i.e. $\bfL_M = \bfI = \bfL_0$. We note that $k=16$ corresponds to $\bfL_M \equiv \bfL$. {{Legend entries correspond to different choices of $\widehat{\bfR}$} as described in Section \ref{sec:Rprecond}}.}
		\label{fig:LorenzDiag}
	\end{figure}
	
	\begin{table}[]
		\begin{small}
			\centering
			\begin{tabular}{c|cccc|ccccc}
				$k$ &$\bfR_i$    &   $\bfD_i$    &  $\widehat{\bfD}_i^{-1}$  &   $\bfM_i/\bfM_i^\top$ &$\bfR_i$  &$\bfR_{block}^{-1}$  &    $\bfD_i$    &  $\widehat{\bfD}_i^{-1}$  &   $\bfM_i/\bfM_i^\top$\\
				\hline
				1&    22496 &   44992&    22496&    42180&    10704&    10704&    21408&    10704 &   20070\\ 
				2&   18368&   36736 &   18368  &  34440 &    8240&     8240 &   16480  &   8240  &  15450\\  
				3&    16688&   33376 &   16688&    47978  &   7536    & 7536 &   15072    & 7536 &   21666\\
				4&    13168  &  26336  &  13168  &  41150&     6624  &   6624  &  13248 &    6624&    20700\\
				7&   11776  &  23552  &  11776  &  41216    & 6080  &  6080 &   12160 &    6080&    21280\\
				10&     9520  &  19040  &   9520  &  34510    & 4816  &   4816   &  9632 &    4816&    17458\\
				16&     3520  &  7040 &   3520 &  12760    &1376  &  1376 &   2752 &1376 &   4988
			\end{tabular}
			\caption{Experiment A: Total number of matrix--vector products with component matrices for $\mathbfcal{P}_{D}$ for increasing $k$ for $\bfR_{diag}$ (left) and $\bfR_{block}$ (right).}
			\label{tab:Lorenz96PD}
		\end{small}
	\end{table}
	
	We now study the performance of the proposed choices of preconditioners for the Lorenz 96 problem {{(Experiment A in Table~\ref{tab:ExperimentalDesign})}}. 
	{The left panel of} Figure~\ref{fig:LorenzDiag} shows how the  number of iterations  required for convergence changes with the choice of $\widehat{\bfL}$ and $\widehat{\bfR}$ within $\mathbfcal{P}_D$ as $k$ increases. Including some model information in $\widehat{\bfL}$ leads to a reduction in iterations  compared to $k=1$ for all choices of $\widehat{\bfR}$. For  $k<5$ including more model information leads to faster convergence. However, for $k\ge5$ the change in iterations is non monotonic. Including correlation information in $\widehat{\bfR}$ results in large improvements to convergence.  { We note that the lines corresponding to the correlated choices of $\widehat{\bfR}$ ($\bfR_{block}$, $\bfR_{RR}$, and $\bfR_{ME}$) lie almost directly on top of the line for $\bfR$.} Indeed, if using $\bfR_{diag}$ we require $k\ge 8$ to obtain fewer iterations than using $\bfL_0$ with an improved choice of $\widehat{\bfR}$. There is very little difference {in performance} between the correlated choices of $\widehat{\bfR}$. 
	For all choices of $\widehat{\bfR}$ the smallest number of iterations occurs when using $k=N+1$, i.e. the exact choice of $\bfL$.  {{Table~\ref{tab:Lorenz96PD} shows the number of matrix--vector products required to reach convergence for the block diagonal preconditioner {\rev{using $\bfR_{diag}$ and $\bfR_{block}$. Results using $\bfR_{RR}$ and $\bfR_{ME}$ are  similar to those with $\bfR_{block}$}}. The number of matrix--vector products with $\bfD_i$, $\widehat{\bfD}_i^{-1}$ and $\bfM_i$ is reduced when using $\bfR_{block}$ compared to $\bfR_{diag}$. For some choices of $k$ the {total} number of evaluations with $\bfR_i$ and $\bfR_{block}^{-1}$ is slightly larger than in the $\bfR_{diag}$ case. 
			Increasing $k$ broadly decreases the number of matrix--vector products with the error covariance matrices and their inverses. For some choices of $k>1$ more evaluations of $\bfM_i$ and $\bfM_i^\top$ are required than when using $\bfL_0$. However, this increase is small compared to the decrease in the other components. }}
	
	\begin{table}[]
		\begin{small}
			\centering
			\begin{tabular}{c|ccc|cccc}
				$k$ &$\bfR_i$    &   $\bfD_i$     &   $\bfM_i/\bfM_i^\top$ &$\bfR_i$  &$\bfR_{block}^{-1}$  &    $\bfD_i$      &   $\bfM_i/\bfM_i^\top$\\
				\hline
				1&        8624 &   17248 &   16170&    3344 &   3344 &    6688 &   6270\\
				2&    7088 &   14176 &   13290&    2704 &   2704 &    5408 &   5070\\
				3&   6304 &   12608 &   18124&    2400 &   2400 &    4800 &   6900\\
				4&  6064 &   12128 &   18950&    2336 &   2336 &    4672 &   7300\\
				7&    5264 &   10528 &   18424&    2000&    2000 &    4000&    7000\\
				10&    5040 &   10080 &   18270&    1904&    1904 &    3808&    6902\\
				16&   4384 &    8768 &   15892&    1648&    1648 &    3296&    5974\\
			\end{tabular}
			\caption{Experiment A: Total number of matrix--vector products with component matrices for $\mathbfcal{P}_{I}$ for increasing $k$ for $\bfR_{diag}$ (left) and $\bfR_{block}$ (right).}
			\label{tab:Lorenz96IC}
		\end{small}
	\end{table}
	
	{The right panel of} Figure \ref{fig:LorenzDiag} shows how the  number of iterations required for convergence changes with the choice of $\widehat{\bfL}$ and $\widehat{\bfR}$ within $\mathbfcal{P}_I$ as $k$ increases. We see a clear benefit of including model information in terms of a reduction in iterations. Increasing $k$ leads to a reduction in the number of iterations required for convergence when using $\bfL_M$ for all choices of $\widehat{\bfR}$, unlike when using $\mathbfcal{P}_D$.    The benefit of using an improved estimate of $\widehat{\bfR}$ is even more stark, with any choice of correlated $\widehat{\bfR}$ and $\bfL_0$ leading to fewer iterations than $\bfR_{diag}$ even when using $\widehat{\bfL} \equiv \bfL$. {Again, the lines for the correlated choices of $\widehat{\bfR}$ ($\bfR_{block}$, $\bfR_{RR}$, and $\bfR_{ME}$) lie almost directly on top of the line for $\bfR$.}
	{{Table~\ref{tab:Lorenz96IC} shows the total number of matrix--vector products required to reach convergence for the inexact constraint preconditioner {\rev{for $\bfR_{diag}$ and $\bfR_{block}$. Results using $\bfR_{RR}$ and $\bfR_{ME}$ are  similar to those with $\bfR_{block}$}}. In this case, using $\bfR_{block}$ leads to a large reduction in the number of matrix--vector products for all components. Increasing $k$ can lead to increases in the number of model matrix--vector evaluations, but leads to decreases in the total number of matrix--vector products. }}

	Table~\ref{tab:Lorenz96BigExamplePD}  shows the performance of the block diagonal preconditioner and inexact constraint preconditioner, respectively, for a higher-dimensional problem when using $\bfR_{block}$, $\bfR_{RR}$, and $\bfR$ itself to approximate $\bfR$ within the preconditioner.  
	Similarly to the smaller dimensional problem considered in Figure \ref{fig:LorenzDiag}  using $\bfL_M$  leads to improved convergence in terms of iterations compared to $\bfL_0$. The different choices of $\widehat{\bfR}$ lead to comparable iteration numbers and we recall that $\bfR_{block}$ has additional {sparsity} structure.  Increasing $k$ leads to a slight reduction in the number of iterations, but increases the computational cost of each iteration. For this problem, choosing $k=3$ or $4$ allows decreased iteration counts compared to $\bfL_0$, { without too many more matrix--vector products with $\mathbf{M}_i$.} Iteration counts are much smaller for the inexact constraint preconditioner than the block diagonal preconditioner. Overall, using our new preconditioners ${\bfL_M}$ and correlated choices of $\widehat{\bfR}$ result in fewer iterations and {matrix--vector products} compared to those obtained when using $\bfL_0$ or $\bfR_{diag}$ for the Lorenz 96 problem. 
	
	\begin{table}[]
		\begin{small}
			\centering
			\begin{tabular}{c |ccc|ccc}
				& $\bfR_{block}$ & $\bfR_{RR}$ & $\bfR$& $\bfR_{block}$ & $\bfR_{RR}$ & $\bfR$\\ \hline
				$\bfL_0$  & 759 & 822 & 822 & 359 & 275 & 275\\
				$\bfL_M$, $k=3$ & 433 & 466& 467& 244 &205&205 \\
				$\bfL_M$, $k=4$ & 348 & 335 & 336& 228 & 200 &200\\
				$\bfL_M$, $k=5$ & 367 & 354 & 355& 206 & 182 & 182\\
			\end{tabular}
			\caption{{{Experiment A}}: Number of iterations required for convergence of {{MINRES}} with the block diagonal preconditioner $\mathbfcal{P}_D$ (left) and $\mathbfcal{P}_I$ (right) applied to the Lorenz 96 problem, using $\bfR_{block}$, $\bfR_{RR}$, $\bfR$ in combination with $\bfL_0$, $\bfL_M$ ($k= 3,4,5$). Here,  
				$\mathbfcal{A}\in \mathbb{R}^{1,600,000\times 1,600,000}$.}
			
			\label{tab:Lorenz96BigExamplePD}
		\end{small}
	\end{table}

	\subsection{Heat equation with Dirichlet boundary conditions}
	\label{sec:HeatEquation}
	The second problem of interest that we consider here is the one-dimensional heat equation on the unit line 
	\begin{equation}\label{eq:heatequation}
		\frac{\partial u}{\partial t} = \alpha \, \frac{\partial^2 u}{\partial x^2},
	\end{equation}
	with { homogeneous Dirichlet boundary conditions}. We discretise \eqref{eq:heatequation} using the forward Euler method in time and second-order centred differences in space. This means we can write the model evolution in matrix form for a single time model step as  $\mathbf{u}^{t+\Delta t} = \bfM_{\Delta t} \mathbf{u}^{t}$,
	where $\bfM_{\Delta t}$ denotes the application of a single model time-step of length $\Delta t$ to the heat equation with Dirichlet boundary conditions; this is given by 
	\begin{equation*}\label{eq:MheateqnzeroBC}
		\bfM_{\Delta t} = \begin{pmatrix}
			0& 0 & 0 & 0 &\cdots &  0 \\
			0 & 1-2r & r & 0 &  \cdots& 0 \\
			0 & r & 1-2r & \ddots & & \vdots \\
			0 & 0 & \ddots & \ddots & r & 0 \\
			\vdots & \vdots && r & 1-2r & 0  \\
			0 & 0 & \cdots  &0& 0& 0 
		\end{pmatrix},
	\end{equation*}
	where $r=\frac{\alpha \Delta t}{(\Delta x)^2}$. {The case of non-homogeneous boundary conditions would follow similarly, by applying a source term to the model evolution equation.} For the numerical experiments presented here we fix $\alpha=1$ and  vary spatial and temporal resolutions together, setting the ratio $r=\frac{\Delta t}{(\Delta x)^2} = 0.4$ for all experiments.
	\begin{figure}
		\centering
		\includegraphics[width = 0.95\textwidth, clip,trim=10mm 0mm 20mm 0mm]{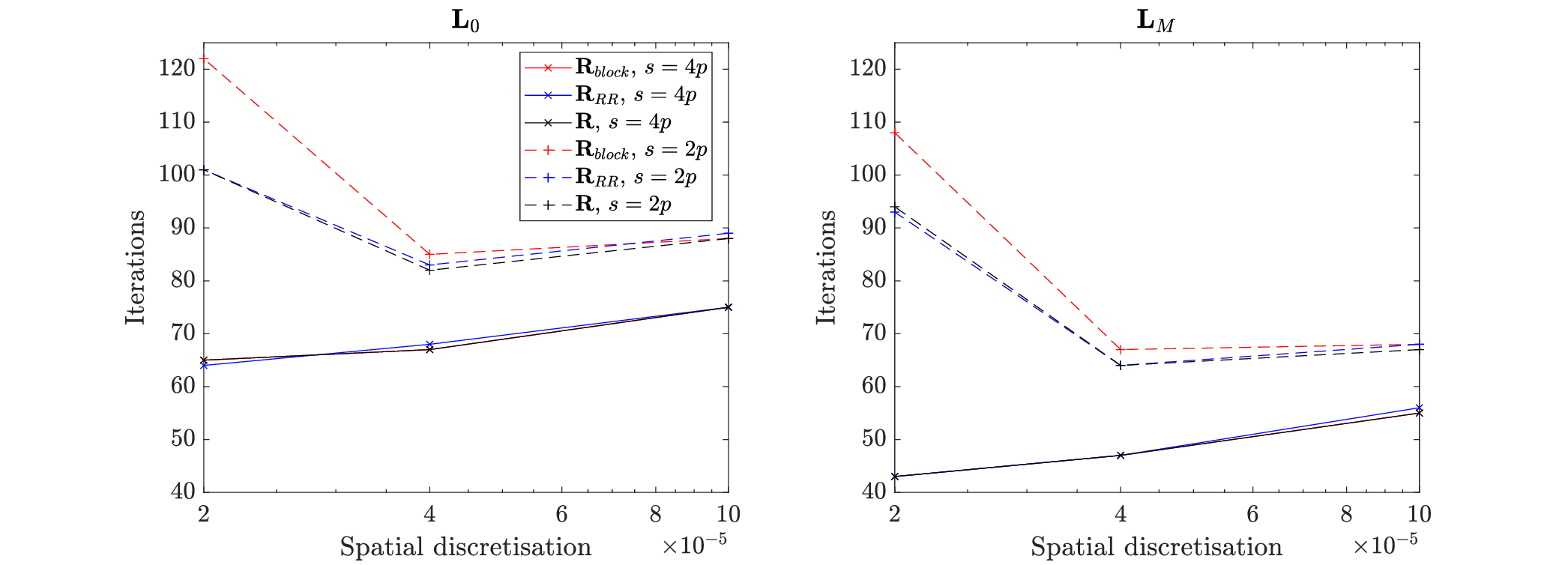}
		\caption{{\rev{Experiment B}}: Number of iterations required for convergence using $\bfL_0$ ({left}) and $\bfL_M$ ({right}) for different choices of $\widehat{\bfR}$ within the inexact constraint preconditioner $\mathbfcal{P}_I$ for the heat equation, for large choices of $s$. Two observation ratios are shown: $p = \frac{s}{2}$ (dashed line) and $p=\frac{s}{4}$ (solid line). The dimension of the problem ranges from $\mathbfcal{A} \in \mathbb{R}^{150,000\times150,000}$ (right) to $\mathbfcal{A} \in \mathbb{R}^{750,000\times750,000}$ (left) for $p = \frac{s}{2}$, and $\mathbfcal{A} \in \mathbb{R}^{135,000\times 135,000}$ (right) to $\mathbfcal{A} \in \mathbb{R}^{675,000\times 675,000}$ (left) for $p = \frac{s}{4}$. {{Legend entries correspond to different choices of $\widehat{\bfR}$} as described in Section 5.}}
		\label{fig:ConvergenceICLarge}
	\end{figure}
	
	{\rev{One advantage of the heat equation test problem is the ability to consider how our new preconditioners scale with problem size. We now consider the best-performing preconditioners for a high-dimensional example, namely the inexact constraint preconditioner for $\bfR_{block},\bfR_{RR}$, and $\bfR$. }} 
	{{Experiment B studies}} the behaviour of the inexact constraint preconditioner for high-dimensional problems. In particular, for $\Delta x=2\times10^{-5}$ the dimension of the full saddle point problem is $750,000\times 750,000$ for $s=2p$. 
	In Figure \ref{fig:ConvergenceICLarge} we only consider the inexact constraint preconditioner and the three best choices of $\widehat{\bfR}$. We find that even for a high-dimensional problem the number of iterations is small, {with only a modest difference between the results for $\bfR_{block}$ and $\bfR$ (as well as $\bfR_{RR}$)}. Similarly to the lower dimensional case, using $\bfL_M$ requires fewer iterations than using $\bfL_0$. 
	Figure  \ref{fig:ConvergenceICLarge} also considers two observation networks. We see that using a larger number of observations requires a larger number of iterations to reach convergence, which coincides with the findings of \cite{dauvzickaite2020spectral} for the unpreconditioned case. However, the qualitative behaviour across the different choices of $\widehat{\bfL}$ and $\widehat{\bfR}$ is the same for both observation networks. 
	
	\section{Conclusions}\label{sec:Conclusions}
	
	{ We proposed new preconditioners for the saddle point formulation of the weak-constraint 4D-Var data assimilation problem {{in the presence of correlated observation errors}}. Our approach for approximating the model term, $\widehat{\bfL}$, incorporated model information for the first time. We also proposed a range of approaches that permit inclusion of computationally efficient correlation information within the observation error covariance term, $\widehat{\bfR}$.  In summary:
		\begin{itemize}
			\item We developed new bounds for the eigenvalues of the preconditioned saddle point system in the case of a block diagonal preconditioner. 
			\item We investigated how the constituent terms within the bounds behave for existing and proposed choices of $\widehat{\bfL}$ and $\widehat{\bfR}$. 
			Including model information via $\LM$ yields many repeated unit eigenvalues of $\LM^{-\top}\bfL^\top\bfL\LM^{-1}$.  Our new approaches yield eigenvalues of this matrix that are frequently bounded above by moderate numbers. 
			\item We considered two numerical examples: the Lorenz 96 problem and the heat equation. Including model information via $\LM$ reduced iterations for both problems. 
		\end{itemize}
		
		The use of preconditioners that account for correlated observation information led to a significant reduction in iterations for all experiments. 
		For many problems where $\bfR$ is very ill-conditioned, we would expect the improvements in performance to be even greater than in the experiments presented here. 
		We find that including additional model information in $\bfL_M$ leads to reduced iterations, but increases the computational expense of each iteration. We therefore suggest that selecting $k=3$ or $k=4$ represents a sensible trade-off. 
		Future work for this problem includes developing efficient approximations of $\bfD$, multi-core implementations of our new preconditioners, and experiments within a full-scale operational NWP system.}

	\section*{Acknowledgements}
	We thank Adam El-Said for his code for the Lorenz 96 weak constraint 4D-Var assimilation problem. We gratefully acknowledge funding from the Engineering and Physical Sciences Research Council (EPSRC) grant EP/S027785/1.
	
	\section*{{Appendix: Block preconditioner for $\widehat{\mathbf R}$}}
	
	{ We state below the algorithm used to apply the block preconditioner $\bfR_{block}$ for $\bfR$, as described in Section \ref{sec:parallelR}.}
	
	\begin{algorithm}[H]
		\caption{Block preconditioner for $\widehat{\mathbf R}$\label{alg:Block}}
		Inputs: $\bfR_i$, $\texttt{pvec} =$ vector of block sizes, $\texttt{tol}=$ tolerance for retaining blocks, $\texttt{maxsize} =$ maximum size permitted on a single processor, $\texttt{numproc} =$ number of available processors.\\
		Compute $\texttt{p} = \texttt{sum(pvec)}$, $\texttt{pn} = |\texttt{pvec}|$. \\
		Define $\texttt{pst} =$ starting index for each new block. \\
		Initialise $\bfR_{block} = \bfR_i$.\\
		\textbf{for} {$j =$ \texttt{1:pn-1}}\\
		~~~~Compute scaled Frobenius norm of super-diagonal blocks via \\
		
		~~~~\texttt{normvec(j) = 1/sqrt(pvec(j)*pvec(j+1))*} \\
		~~~~~~~~\texttt{norm(R(pst(j):pst(j+1)-1,pst(j+1):pst(j+2)-1),`fro')}.\\
		\textbf{end} \\
		Retain blocks where \texttt{normvec(j) >= tol}:\\
		\textbf{for} {$j=$ \texttt{1:pn-1}}\\
		~~~~\textbf{if} { \texttt{normvec(j) < tol}}\\
		
		~~~~~~~~{Set $\bfR_{block}\texttt{(pst(j+1):p,1:pst(j+1)-1) = 0}$.}\\
		~~~~~~~~{Set $\bfR_{block}\texttt{(1:pst(j+1)-1,pst(j+1):p) = 0}$.}\\
		~~~~\textbf{end}\\
		\textbf{end}\\
		\textbf{if} {size of largest block $> \texttt{maxsize}$}\\
		~~~~{Split largest block into two components.}\\
		\textbf{end}\\
		\textbf{if} {number of distinct blocks $> \texttt{numproc}$} \\
		~~~~{Combine two smallest adjacent blocks in $\bfR_{block}$.}\\
		\textbf{elseif} {number of distinct blocks $< \texttt{numproc} - 2$}\\
		~~~~{Split largest block of $\bfR_{block}$ into two components.}\\
		\textbf{end}\\
	\end{algorithm}

	\bibliographystyle{plain}
	\bibliography{bibPrecond}

\begin{thebibliography}{10}

\bibitem{BGL}
M.~Benzi, G.~H. Golub, and J.~Liesen.
\newblock Numerical solution of saddle point problems.
\newblock {\em Acta Numerica}, 14:1--137, 2005.

\bibitem{bergamaschi2007inexact}
L.~Bergamaschi, J.~Gondzio, M.~Venturin, and G.~Zilli.
\newblock Inexact constraint preconditioners for linear systems arising in
  interior point methods.
\newblock {\em Computational Optimization and Applications}, 36(2--3):137--147,
  2007.

\bibitem{bergamaschi2011erratum}
L.~Bergamaschi, J.~Gondzio, M.~Venturin, and G.~Zilli.
\newblock Erratum to: Inexact constraint preconditioners for linear systems
  arising in interior point methods.
\newblock {\em Computational Optimization and Applications}, 49(2):401--406,
  2011.

\bibitem{Bernstein}
D.~S. Bernstein.
\newblock {\em Matrix Mathematics: Theory, Facts, and Formulas}.
\newblock Princeton University Press, Princeton, N.J.; Oxford, UK, 2nd edition,
  2009.

\bibitem{bonavita2017strategy}
M.~Bonavita, Y.~Tr{\'e}molet, E.~Holm, S.~T.~K. Lang, M.~Chrust,
  M.~Janiskov{\'a}, P.~Lopez, P.~Laloyaux, P.~de~Rosnay, M.~Fisher, M.~Hamrud,
  and S.~English.
\newblock {\em A strategy for data assimilation}.
\newblock European Centre for Medium Range Weather Forecasts, Reading, UK,
  2017.

\bibitem{Carrassi18}
A.~Carrassi, M.~Bocquet, L.~Bertino, and G.~Evensen.
\newblock Data assimilation in the geosciences: An overview of methods, issues,
  and perspectives.
\newblock {\em Wiley Interdisciplinary Reviews: Climate Change}, 9(5):e535,
  2018.

\bibitem{COOPER2018199}
E.~S. Cooper, S.~L. Dance, J.~Garcia-Pintado, N.~K. Nichols, and P.~J. Smith.
\newblock Observation impact, domain length and parameter estimation in data
  assimilation for flood forecasting.
\newblock {\em Environmental Modelling \& Software}, 104:199--214, 2018.

\bibitem{dauvzickaite2020spectral}
I.~Dau{\v{z}}ickait{\.e}, A.~S. Lawless, J.~A. Scott, and P.~J. Van~Leeuwen.
\newblock Spectral estimates for saddle point matrices arising in weak
  constraint four-dimensional variational data assimilation.
\newblock {\em Numerical Linear Algebra with Applications}, 27(5):e2313, 2020.

\bibitem{ElAkkraoui13}
A.~El~Akkraoui, Y.~Trémolet, and R.~Todling.
\newblock Preconditioning of variational data assimilation and the use of a
  bi-conjugate gradient method.
\newblock {\em Quarterly Journal of the Royal Meteorological Society: A},
  139(672):731--741, 2013.

\bibitem{el2015conditioning}
A.~El-Said.
\newblock {\em Conditioning of the weak-constraint variational data
  assimilation problem for numerical weather prediction}.
\newblock PhD thesis, University of Reading, 2015.

\bibitem{ESW2014}
H.~Elman, D.~Silvester, and A.~Wathen.
\newblock {\em Finite Elements and Fast Iterative Solvers}.
\newblock Oxford University Press, Oxford, UK, 2nd edition, 2014.

\bibitem{fisher2018low}
M.~Fisher, S.~Gratton, S.~G{\"u}rol, Y.~Tr{\'e}molet, and X.~Vasseur.
\newblock Low rank updates in preconditioning the saddle point systems arising
  from data assimilation problems.
\newblock {\em Optimization Methods and Software}, 33(1):45--69, 2018.

\bibitem{fisher2017parallelization}
M.~Fisher and S.~G{\"u}rol.
\newblock Parallelization in the time dimension of four-dimensional variational
  data assimilation.
\newblock {\em Quarterly Journal of the Royal Meteorological Society: B},
  143(703):1136--1147, 2017.

\bibitem{freitag2018low}
M.~A. Freitag and D.~L.~H. Green.
\newblock A low-rank approach to the solution of weak constraint variational
  data assimilation problems.
\newblock {\em Journal of Computational Physics}, 357:263--281, 2018.

\bibitem{gratton2018guaranteeing}
S.~Gratton, S.~G{\"u}rol, E.~Simon, and P.~L. Toint.
\newblock Guaranteeing the convergence of the saddle formulation for weakly
  constrained {4D-Var} data assimilation.
\newblock {\em Quarterly Journal of the Royal Meteorological Society: B},
  144(717):2592--2602, 2018.

\bibitem{GrattonS.2007AGMf}
S.~Gratton, A.~S. Lawless, and N.~K. Nichols.
\newblock Approximate {Gauss--Newton} methods for nonlinear least squares
  problems.
\newblock {\em SIAM Journal on Optimization}, 18(1):106--132, 2007.

\bibitem{gratton2011range}
S.~Gratton, P.~L. Toint, and J.~Tshimanga~Ilunga.
\newblock Range-space variants and inexact matrix-vector products in {K}rylov
  solvers for linear systems arising from inverse problems.
\newblock {\em SIAM Journal on Matrix Analysis and Applications},
  32(3):969--986, 2011.

\bibitem{green2019model}
D.~Green.
\newblock {\em Model order reduction for large-scale data assimilation
  problems}.
\newblock PhD thesis, University of Bath, 2019.

\bibitem{greenbaum1996any}
A.~Greenbaum, V.~Pt{\'a}k, and Z.~Strako{\v{s}}.
\newblock Any nonincreasing convergence curve is possible for {GMRES}.
\newblock {\em SIAM Journal on Matrix Analysis and Applications},
  17(3):465--469, 1996.

\bibitem{greif2011multi}
C.~Greif, T.~Rees, and D.~B. Szyld.
\newblock {GMRES} with multiple preconditioners.
\newblock {\em SeMA Journal}, 74(2):213--231, 2017.

\bibitem{Gurol14}
S.~G{\"u}rol, A.~T. Weaver, A.~M. Moore, A.~Piacentini, H.~G. Arango, and
  S.~Gratton.
\newblock {$B$}-preconditioned minimization algorithms for variational data
  assimilation with the dual formulation.
\newblock {\em Quarterly Journal of the Royal Meteorological Society: B},
  140(679):539--556, 2014.

\bibitem{haben11c}
S.~A. Haben.
\newblock {\em Conditioning and preconditioning of the minimisation problem in
  variational data assimilation}.
\newblock PhD thesis, Department of Mathematics and Statistics, University of
  Reading, 2011.

\bibitem{cg}
M.~R. Hestenes and E.~Stiefel.
\newblock Methods of conjugate gradients for solving linear systems.
\newblock {\em Journal of Research of the National Bureau of Standards},
  49(6):409--436, 1952.

\bibitem{horn_johnson_1985}
R.~A. Horn and C.~R. Johnson.
\newblock {\em Matrix Analysis}.
\newblock Cambridge University Press, 1985.

\bibitem{Kuznetsov}
{Y}.~A. Kuznetsov.
\newblock Efficient iterative solvers for elliptic finite element problems on
  nonmatching grids.
\newblock {\em Russian Journal of Numerical Analysis and Mathematical
  Modelling}, 10(3):187--211, 1995.

\bibitem{lorenz1996predictability}
E.~N. Lorenz.
\newblock Predictability: a problem partly solved.
\newblock In {\em Seminar on Predictability, 4--8 September 1995}, Reading,
  1995. ECMWF.

\bibitem{MGW}
M.~F. Murphy, G.~H. Golub, and A.~J. Wathen.
\newblock A note on preconditioning for indefinite linear systems.
\newblock {\em SIAM Journal on Scientific Computing}, 21(6):1969--1972, 2000.

\bibitem{minres}
C.~C. Paige and M.~A. Saunders.
\newblock Solution of sparse indefinite systems of linear equations.
\newblock {\em SIAM Journal on Numerical Analysis}, 12(4):617--629, 1975.

\bibitem{website:MINRES}
C.~C. Paige, M.~A. Saunders, S.-C. Choi, D.~Orban, U.~E. Villa, D.~Maddix, and
  S.~Regev.
\newblock {MINRES}: Sparse symmetric equations, \emph{software available at}
  {\url{https://web.stanford.edu/group/sol/software/minres/}}, 2020.

\bibitem{PinningtonEwanM.2016Itro}
E.~M. Pinnington, E.~Casella, S.~L. Dance, A.~S. Lawless, J.~I.~L. Morison,
  N.~K. Nichols, M.~Wilkinson, and T.~L. Quaife.
\newblock Investigating the role of prior and observation error correlations in
  improving a model forecast of forest carbon balance using {Four}-dimensional
  {Variational} data assimilation.
\newblock {\em Agricultural and Forest Meteorology}, 228--229:299--314, 2016.

\bibitem{PinningtonEwanM.2017Uteo}
E.~M. Pinnington, E.~Casella, S.~L. Dance, A.~S. Lawless, J.~I.~L. Morison,
  N.~K. Nichols, M.~Wilkinson, and T.~L. Quaife.
\newblock Understanding the effect of disturbance from selective felling on the
  carbon dynamics of a managed woodland by combining observations with model
  predictions.
\newblock {\em Journal of Geophysical Research: Biogeosciences},
  122(4):886--902, 2017.

\bibitem{rawlins07}
F.~Rawlins, S.~P. Ballard, K.~J. Bovis, A.~M. Clayton, D.~Li, G.~W. Inverarity,
  A.~C. Lorenc, and T.~J. Payne.
\newblock The {Met} {Office} global four-dimensional variational data
  assimilation scheme.
\newblock {\em Quarterly Journal of the Royal Meteorological Society: B},
  133(623):347--362, 2007.

\bibitem{Rees10}
T.~Rees.
\newblock {\em Preconditioning iterative methods for {PDE} constrained
  optimization}.
\newblock PhD thesis, University of Oxford, 2010.

\bibitem{ReWa11}
T.~Rees and A.~J. Wathen.
\newblock Preconditioning iterative methods for the optimal control of the
  {S}tokes equation.
\newblock {\em SIAM Journal on Scientific Computing}, 33(5):2903--2926, 2011.

\bibitem{gmres}
Y.~Saad and M.~H. Schultz.
\newblock {GMRES}: {A} generalized minimal residual algorithm for solving
  nonsymmetric linear systems.
\newblock {\em SIAM Journal on Scientific and Statistical Computing},
  7(3):856--869, 1986.

\bibitem{simonin2019pragmatic}
D.~Simonin, J.~A. Waller, S.~P. Ballard, S.~L. Dance, and N.~K. Nichols.
\newblock A pragmatic strategy for implementing spatially correlated
  observation errors in an operational system: An application to {Doppler}
  radial winds.
\newblock {\em Quarterly Journal of the Royal Meteorological Society: B},
  145(723):2772--2790, 2019.

\bibitem{stewart08}
L.~M. Stewart, S.~L. Dance, and N.~K. Nichols.
\newblock Correlated observation errors in data assimilation.
\newblock {\em International Journal for Numerical Methods in Fluids},
  56(8):1521--1527, 2008.

\bibitem{tabeart2018conditioning}
J.~M. Tabeart, S.~L. Dance, S.~A. Haben, A.~S. Lawless, N.~K. Nichols, and
  J.~A. Waller.
\newblock The conditioning of least-squares problems in variational data
  assimilation.
\newblock {\em Numerical Linear Algebra with Applications}, 25(5):e2165, 2018.

\bibitem{tabeart2020improving}
J.~M. Tabeart, S.~L. Dance, A.~S. Lawless, N.~K. Nichols, and J.~A. Waller.
\newblock Improving the condition number of estimated covariance matrices.
\newblock {\em Tellus A: Dynamic Meteorology and Oceanography}, 72(1):1--19,
  2020.

\bibitem{tabeart2020conditioning}
J.~M. Tabeart, S.~L. Dance, A.~S. Lawless, N.~K. Nichols, and J.~A. Waller.
\newblock New bounds on the condition number of the {Hessian} of the
  preconditioned variational data assimilation problem.
\newblock {\em Numerical Linear Algebra with Applications}, 29(1):e2405, 2022.

\bibitem{tandeo2020review}
P.~Tandeo, P.~Ailliot, M.~Bocquet, A.~Carrassi, T.~Miyoshi, M.~Pulido, and
  Y.~Zhen.
\newblock A review of innovation-based methods to jointly estimate model and
  observation error covariance matrices in ensemble data assimilation.
\newblock {\em Monthly Weather Review}, 148(10):3973--3994, 2020.

\bibitem{tr2006accounting}
Y.~Tr{\'e}molet.
\newblock Accounting for an imperfect model in {4D-Var}.
\newblock {\em Quarterly Journal of the Royal Meteorological Society: B},
  132(621):2483--2504, 2006.

\bibitem{tremolet2007model}
Y.~Tr{\'e}molet.
\newblock Model-error estimation in {4D-Var}.
\newblock {\em Quarterly Journal of the Royal Meteorological Society: A},
  133(626):1267--1280, 2007.

\bibitem{waller18}
J.~A. Waller, J.~Garc{\'\i}a-Pintado, D.~C. Mason, S.~L. Dance, and N.~K.
  Nichols.
\newblock Assessment of observation quality for data assimilation in flood
  models.
\newblock {\em Hydrology and Earth System Sciences}, 22(7):3983--3992, 2018.

\bibitem{weston14}
P.~P. Weston, W.~Bell, and J.~R. Eyre.
\newblock Accounting for correlated error in the assimilation of
  high-resolution sounder data.
\newblock {\em Quarterly Journal of the Royal Meteorological Society: B},
  140(685):2420--2429, 2014.

\bibitem{Yaglom87}
A.~M. Yaglom.
\newblock {\em {Correlation Theory of Stationary and Related Random Functions,
  Volume I: Basic Results}}.
\newblock Springer-Verlag, New York, 1987.

\end{thebibliography}
\end{document}